\newtheorem{theorem}{Theorem}[section]
\newtheorem{lemma}[theorem]{Lemma}
\newtheorem{corollary}[theorem]{Corollary}
\theoremstyle{definition}
\newcommand{\floor}[1]{\left\lfloor #1 \right\rfloor}
\newcommand{\ceil}[1]{\left\lceil #1 \right\rceil}
\newcommand{\T}{\mathcal{T}}
\newcommand{\loc}[1]{\zeta(#1)}
\newcommand{\capt}{\mathrm{capt}}
\begin{document}
\title{The localization capture time of a graph}\thanks{The second, third, and fifth authors were supported by NSERC}
\author[N.C.\ Behague]{Natalie C.\ Behague}
\author[A.\ Bonato]{Anthony Bonato}
\author[M.A.\ Huggan]{Melissa A.\ Huggan}
\author[T.G.\ Marbach]{Trent G.\ Marbach}
\author[B.\ Pittman]{Brittany Pittman}
\address[A1, A2, A3, A4, A5]{Department of Mathematics, Ryerson University, Toronto, Canada,  M5B 2K3.}
\email[A1]{(A1) nbehague@ryerson.ca}
\email[A2]{(A2) abonato@ryerson.ca}
\email[A3]{(A3) melissa.huggan@ryerson.ca}
\email[A4]{(A4) trent.marbach@ryerson.ca}
\email[A5]{(A5) brittany.pittman@ryerson.ca}

\begin{abstract}
The localization game is a pursuit-evasion game analogous to Cops and Robbers, where the robber is invisible and the cops send distance probes in an attempt to identify the location of the robber. We present a novel graph parameter called the capture time, which measures how long the localization game lasts assuming optimal play. We conjecture that the capture time is linear in the order of the graph, and show that the conjecture holds for graph families such as trees and interval graphs. We study bounds on the capture time for trees and its monotone property on induced subgraphs of trees and more general graphs. We give upper bounds for the capture time on the incidence graphs of projective planes. We finish with new bounds on the localization number and capture time using treewidth.
\end{abstract}

\keywords{Localization game, localization number, capture time, trees, pathwidth, projective planes, treewidth}
\subjclass{05C57,05C05}

\maketitle
\section{Introduction}

\emph{Pursuit-evasion games} are combinatorial models for the detection or neutralization of an adversary's activity on a graph. In such models, agents or cops are attempting to capture an adversary or robber loose on the vertices of a graph. The players move at alternating ticks of the clock, and have restrictions on their movements or relative speed depending on the game played. The most studied such game is Cops and Robbers, where the cops and robber can only move to vertices with which they share an edge. The cop number is the minimum number of cops needed to guarantee the robber's capture. How the players move and the rules of capture depend on which variant is studied. These variants are motivated by problems in practice or inspired by foundational issues in computer science, discrete mathematics, and artificial intelligence, such as robotics and network security. For surveys of pursuit-evasion games, see~\cite{bp,by}, and see \cite{BN} for more background on Cops and Robbers.

The localization game was first introduced for one cop by Seager~\cite{seager1,seager2} and was further studied in \cite{BHM,BHM1,BK,nisse1,BDELM,car,DEFMP,DFP,has}. In the localization game, two players play on a connected graph, with one player controlling a set of $k$ \emph{cops}, where $k$ is a positive integer, and the second controlling a single \emph{robber}. The robber is invisible to the cops during gameplay. When there is no ambiguity, we identify a player with the vertex it occupies. The game is played over a sequence of discrete time-steps; a \emph{round} is a cop move and a subsequent robber move.

The robber occupies a vertex of the graph, and when the robber is ready to move during a round, they may move to a neighboring vertex or remain on their current vertex.  A move for the cops is a placement of cops on a set of vertices. Note that the cops are not limited to moving to neighboring vertices. At the beginning of the game, the robber chooses a starting vertex. After this, the cops move first, followed by the robber; thereafter, they move in alternate turns. Observe that any subset of cops may move in a given round. In each round, the cops occupy a set of vertices $u_1, u_2, \ldots , u_k$ and each cop sends out a \emph{cop probe}, which gives their distance $d_i$, from $u_i$ to the robber, where $1\le i \le k$. Hence, in each round, the cops determine a \emph{distance vector} $(d_1, d_2, \ldots , d_k)$ of cop probes. The cops win if they have a strategy to determine, after finitely many rounds, the vertex the robber occupies, at which time we say that the cops {\em capture} the robber. The robber wins if they are never captured.

For a connected graph $G$, define the \emph{localization number} of $G$, written $\loc{G}$, to be the least integer $k$ for which $k$ cops have a winning strategy over any possible strategy of the robber; that is, we consider the worst case for the cops in that the robber is \emph{omniscient} and so knows the entire strategy of the cops. As $\loc{G}$ is at most $n-1,$ the parameter is well-defined. Note that $\loc{G} \le  \beta(G),$ where $\beta(G)$ is the metric dimension of $G.$

In \cite{nisse1}, it was shown that $\loc{G}$ is bounded above by the pathwidth of $G$ and that the localization number is unbounded even on planar graphs obtained by adding a universal vertex to a tree. They also proved that computing $\loc{G}$ is \textbf{NP}-hard for graphs with diameter $2$. Bonato and Kinnersley~\cite{BK} studied the localization number for graphs based on their degeneracy. In ~\cite{BK}, they resolved a conjecture from \cite{nisse1} relating $\loc{G}$ and the chromatic number; further, they proved that the localization number of outerplanar graphs is at most 2, and they proved an asymptotically tight upper bound on the localization number of the hypercube. The localization number of the incidence graphs of designs was studied in \cite{BHM}. In particular, they gave exact values for the localization number of the incidence graphs of projective and affine planes, and bounds for the incidence graphs of Steiner systems and transversal designs. The localization number of graph products was considered in \cite{cartesianproducts}, and in diameter 2 graphs such as Kneser, Moore, and polarity graphs in \cite{BHM1}. Localization was studied in random binomial graphs in \cite{DFP,DEFMP} and in random geometric graphs in \cite{MP}.

In the present paper, we focus not only on the number of cops needed to capture the robber in the localization game, but the \emph{time} or minimum number of rounds it takes to do so. For a graph $G$ and an integer $k \ge \zeta(G),$ the corresponding optimization parameter is $\mathrm{capt}_{\zeta,k}(G),$ which is the minimum number of rounds for the cops to capture the robber.  If $k = \zeta(G),$ then we simplify this to $\mathrm{capt}_{\zeta}(G)$. We refer to this graph parameter as the \emph{capture time} of $G$ for the localization game. Note that we assume here that the cops minimize the number of rounds needed for capture, while the robber maximizes the number of rounds for capture.  For example, if $n\ge 1$ is an integer, then $\zeta(K_{1,n}) = 1$ and $\mathrm{capt}_{\zeta}(K_{1,n}) =n-1$.

There is an analogous temporal parameter defined for the cop number, also referred to as capture time, first introduced in \cite{bghk}. Since we focus almost exclusively on the localization number in this paper, there will be no confusion between the parameters. Note that as $k$ increases, $\mathrm{capt}_{\zeta ,k}(G)$ monotonically decreases. In analogy with \cite{over}, we refer to this in \emph{temporal speed-up} and the number of cops $k> \zeta(G)$ as \emph{overprescribed}. If $k=\beta(G),$ then $\mathrm{capt}_{\zeta,k}(G)=1.$

Capture time was implicitly defined in the first paper of Seager \cite{seager1}, where only one cop played. The parameter there was called the \emph{location number}, written $\mathrm{loc}(G),$ which is defined for $G$ with $\zeta(G)=1$ as the number of probes (that is, rounds) needed to capture the robber. The location number was studied in several subsequent publications, such as \cite{BDELM,car,has,seager2}. The present paper is the first place where the capture time is explicitly introduced in the general setting, where there may be more than one cop and possibly more cops than the localization number.

The capture time of a graph is challenging to calculate exactly for general graphs, and as such, we present bounds for general graph families, and a few exact values along the way. We present a conjecture on the capture time in Section~2 claiming that the capture time is linear in the order of the graph. We show the conjecture holds for graph families such as trees and interval graphs. We discuss the monotone property for the capture time on induced subgraphs. In particular, we show that capture time is monotone on induced subgraphs in a general family of graphs including trees. The capture time of trees is considered in Section~3, and we derive upper and lower bounds in Theorems~\ref{ttree} and \ref{ttree1}, respectively. We derive nearly tight values for the capture time on perfect $m$-ary trees in the overprescribed setting. In Section~4, we consider the capture time of the incidence graphs of projective planes of order $q.$ In \cite{BHM}, it was shown that such graphs have localization number $q+1.$ In Theorem~\ref{thm:captProj}, we show that if $G$ is the incidence graph of a projective plane of order $q$, then for $k \ge \zeta(G),$
\[\capt_{\zeta,k}(G) \leq \left\lceil \frac{q-1}{k-q}\right\rceil + \left\lceil \frac{q}{k-q+1}\right\rceil.\]
We present novel bounds on the localization number and capture time in terms of the treewidth of a graph. The final section includes several open problems.

Throughout, all graphs considered are simple, undirected, connected, and finite. For a general reference for graph theory, see~\cite{West}. The \emph{closed neighborhood} of $u$, written $N[u],$ consists of a vertex $u$ along with neighbors of $u$. The \emph{distance} between vertices $u$ and $v$ is denoted by $d(u,v).$ The maximum degree of a graph $G$ is denoted by $\Delta(G).$ For a graph $G,$ we denote the subgraph induced by $H$ by $G[H]$. A \emph{leaf} is a vertex of degree 1.

\section{Well-localizable graphs and the monotone property}

A basic question is how large the capture time may be as a function of the order of the graph. One motivation for this question comes from graph searching, where a set of searchers attempts to clear edges of contamination by prescribed rules; for more background on graph searching, see \cite{fomin} and the recent survey \cite{nisses}. In graph searching, it was shown in \cite{BiSe91,LaPa88} that for a graph of order $n,$ there exist winning strategies that require at most $n$ rounds. In particular, searchers never need to reclear edges. In contrast, the capture time for the cop number may be superlinear in the number of vertices. For graphs with cop number $k\ge 2$, it was proved in \cite{bi} that the capture time is $O(n^{k+1})$. Recently, the $O(n^{k+1})$ bound was proven to be asymptotically tight in \cite{br,WB}. Note that for graphs of order $n$ with cop number 1, the capture time is $O(n);$ see \cite{bghk}.

In the setting of the localization game, we do not know of graphs $G$ where the $\mathrm{capt}_{\zeta}(G)$ is superlinear in the order of $G$. It may be the case that the capture time is at most $n$, where $n$ is order of the graph, but we propose the following more modest conjecture.
\medskip

\noindent \emph{Localization Capture Time Conjecture (LCTC)}: If $G$ is a graph of order $n,$ then $$\mathrm{capt}_{\zeta}(G) = O(n).$$

\medskip

We say that a graph $G$ with order $n$ is \emph{well-localizable} if it satisfies $\mathrm{capt}_{\zeta}(G) = O(n).$ The LCTC may be rephrased as saying that all graphs are well-localizable.

One important family where LCTC holds is for trees.
\begin{theorem}\label{treew}
Trees are well-localizable.
\end{theorem}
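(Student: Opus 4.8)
The plan is to describe a two-cop strategy that always locates the robber in a linear number of rounds; since such a strategy is itself a proof that $\loc{T}\le 2$, it is legitimate to play with two cops whenever $\loc{T}=2$. First I would root $T$ at an arbitrary vertex $\rho$, and write $T_v$ for the subtree rooted at $v$ and $C(v)$ for its set of children. Throughout the game I keep one cop, the \emph{anchor}, at a vertex $v$ while preserving the invariant that the robber lies in $T_v$. The role of the anchor is that to leave $T_v$ the robber must pass through $v$; since the cops probe before the robber moves, the round in which the robber stands on $v$ has the anchor reading distance $0$, and the robber is located. Hence anchoring a cop at $v$ confines the robber to $T_v$ by information rather than by blocking, and if it ever reaches $v$ the game ends.

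The second cop acts as a \emph{discriminator} that decides which child-subtree contains the robber. Suppose the anchor sits at $v$ and its probe returns depth $\delta=d(v,r)\ge 1$ in the current round. If in the same round I place the discriminator at a child $c\in C(v)$, then on a tree $d(c,r)=\delta-1$ when $r\in T_c$ and $d(c,r)=\delta+1$ when $r$ lies in a different child-subtree, since any such path must route through $v$. Thus one round tests one candidate child. Crucially, while the anchor stays at $v$ the robber cannot move between distinct child-subtrees without crossing $v$ (and being caught), so it is confined to a single fixed $T_c$ for the whole testing phase; probing the children one per round therefore identifies the correct $c$ in at most $|C(v)|$ rounds.

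Once the correct child $c$ is found I relocate the anchor from $v$ to $c$, shrinking the robber territory to $T_c$, and repeat. The delicate point, which I expect to be the main obstacle, is verifying that confinement survives the relocation round. When the anchor moves off $v$ the robber is known to sit in $T_c$ at distance at least $1$ from $v$, that is, on $c$ or below: if it is on $c$ the anchor's arrival reads $0$ and captures it, and if it is strictly below $c$ it cannot reach $v$ in the single intervening step, so the new anchor at $c$ re-establishes the invariant. The anchor therefore descends along a single root-to-leaf path $\rho=v_0,v_1,\dots,v_L$, and at $v_L$ (a leaf, or any vertex on which the robber is detected) the game ends. Counting rounds, the phase at $v_j$ costs at most $|C(v_j)|+1$ rounds, and since the $v_j$ are distinct, $\sum_j |C(v_j)|\le\sum_{v}|C(v)|=n-1$, while $L$ is at most the depth of $T$; hence the total is at most $2n$, giving $\capt_{\zeta,2}(T)=O(n)$.

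This settles the claim for trees with $\loc{T}=2$, where $\capt_{\zeta}(T)=\capt_{\zeta,2}(T)$. The remaining case is trees with $\loc{T}=1$, for which well-localizability requires a \emph{single}-cop strategy of linear length; here the anchor/discriminator split is unavailable, so I would instead exploit the structure of one-cop-locatable trees to run an analogous monotone sweep in which the lone cop alternately fixes the robber's depth and discriminates, again charging the total work to $\sum_v |C(v)|=n-1$. Reconciling this one-cop sweep with the robber's movement is the second place I expect difficulty, and it is the part most naturally handled together with the sharper tree bounds of Section~3.
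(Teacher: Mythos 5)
There is a genuine gap: your argument only covers trees with $\zeta(T)=2$. By definition, $\capt_{\zeta}(T)$ is computed with exactly $\zeta(T)$ cops, so for trees with $\zeta(T)=1$ (paths are the simplest example, but the family is large) well-localizability requires a \emph{single}-cop strategy that locates a moving robber in $O(n)$ rounds. Your closing paragraph acknowledges this but offers only an intention ("exploit the structure of one-cop-locatable trees to run an analogous monotone sweep") rather than an argument. This is not a routine omission: with one cop there is no anchor to confine the robber, the candidate set can regrow after every robber move, and obtaining a linear bound is precisely the content of Seager's analysis (Theorem~8 of \cite{seager2}), which is what the paper cites to dispose of this case. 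As written, half of the theorem is unproven, and it is the half that cannot be handled by your anchor/discriminator mechanism.

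The $\zeta(T)=2$ half of your proposal is essentially the paper's own proof, which invokes the strategy of \cite{nisse2}: one cop fixed at a vertex $u$, the second probing neighbors of $u$ to find the subtree closest to the robber, then descending and recursing; your round-counting ($\sum_v |C(v)| \le n-1$ plus one relocation per level) matches. One detail you should tighten is the relocation step you flag as delicate. Your case split ("on $c$" versus "strictly below $c$") implicitly assumes the robber is strictly below $c$ when the testing phase ends. That is guaranteed if the phase always ends with the discriminator standing on $c$ itself and reading $\delta-1$ (a robber on $c$ would read $0$ and be caught), but if you ever identify $c$ by elimination --- all other children having tested negative --- the robber may be sitting on $c$ at the final probe, step onto $v$ during the relocation, and then the new anchor's reading of $1$ at $c$ cannot distinguish the robber on $v$ from the robber on a child of $c$; from $v$ it can escape into a different subtree, breaking the invariant. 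The fix is cheap --- either always test $c$ directly before relocating, or have the discriminator occupy $v$ during the relocation round --- and the $O(n)$ count survives, but the argument as stated does not cover this case.
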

\begin{proof} Let $T$ be a tree of order $n$. We then have that $\zeta(T) \le 2,$ by \cite{nisse1}. In the case $\zeta(T)=1,$ from the cop strategy presented in the proof of Theorem~8 in \cite{seager2}, the robber is captured in at most $n$ rounds, so the LCTC holds.

In the case when $\zeta(T) =2,$ consider the following strategy from \cite{nisse2}. Place one cop $C_1$ on a fixed vertex $u,$ and let the second cop $C_2$ probe each neighbor of $u,$ looking for a subtree $T_1$ of $T-u,$ where the distance to the robber is smallest. Once $T_1$ is discovered, we move
$C_1$ to the unique vertex of $T_1$ adjacent to $u$ and repeat this procedure on $T_1.$ We recursively search $T$ in this way, and in at most $n$ rounds, identify the robber's location. Hence, the LCTC holds in this case. \end{proof}

Another family where the LCTC holds is for \emph{interval graphs}, which are the intersection graphs of intervals on the real line. Interval graphs are precisely those that are chordal and asteroidal-triple-free; for more background on this graph family, see for example, \cite{spin}.

To prove that interval graphs are well-localizable, we first need the notions of treewidth and pathwidth. Given a graph $G$, a \textit{tree decomposition} is a pair $(X, T)$, where $X = \{B_1, B_2, \dots, B_m\}$ is a
family of subsets of $V(G)$ called \emph{bags}, and $T$ is a tree whose vertices are the subsets $B_i$, satisfying the following three properties.
\begin{enumerate}
\item $V(G)=\bigcup_{i=1}^mB_i.$ That is, each graph vertex is associated with
at least one tree vertex.

\item For every edge $(v, w)$ in the graph, there is a subset $B_i$ that
contains both $v$ and $w$.

\item If $B_i$, $B_j$ and $B_k$ are vertices, and $B_k$  is on the path from $B_i$ to $B_j,$ then $B_i\cap B_j \subseteq B_k$.
\end{enumerate}
The \textit{width} of a tree decomposition is the cardinality of its largest set $B_i$ minus one. The \textit{treewidth} of a graph $G,$ written $\mathrm{tw}(G),$ is
the minimum width among all possible tree decompositions of $G$. We refer to a tree decomposition with width equaling the treewidth as \emph{optimal}. If we restrict $T$ to be a path, then the resulting parameter is called the \emph{pathwidth}\index{pathwidth} of $G$, written $\mathrm{pw}(G).$ Note that $\mathrm{tw}(G) \le \mathrm{pw}(G).$

We have the following.
\begin{theorem}
If $G$ satisfies $\mathrm{pw}(G) = \zeta(G),$ then $G$ is well-localizable. In particular, interval graphs are well-localizable.
\end{theorem}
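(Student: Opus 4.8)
The plan is to prove the general implication first and then to verify that interval graphs satisfy its hypothesis. Write $w = \mathrm{pw}(G) = \zeta(G)$ and fix an optimal path decomposition of $G$; equivalently, fix a linear layout $v_1, v_2, \ldots, v_n$ of $V(G)$ whose vertex separation equals $w$, so that for every index $i$ the set $S_i$ of vertices in $\{v_1,\ldots,v_i\}$ having a neighbour in $\{v_{i+1},\ldots,v_n\}$ has size at most $w$. The strategy witnessing $\zeta(G)\le \mathrm{pw}(G)$ from \cite{nisse1} places its $w$ cops on such a separator and sweeps the layout from left to right. Since $\zeta(G)=w$, this strategy uses exactly $\zeta(G)$ cops, so any bound on its number of rounds is a bound on $\capt_{\zeta}(G)$. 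It therefore suffices to show that the sweep terminates in $O(n)$ rounds.

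First I would make the sweep explicit as a monotone search: at stage $i$ the cops occupy $S_i$ (together with the newly introduced vertex $v_{i+1}$ when needed), the distance probes from these at most $w$ positions decide whether the robber lies on the current frontier or strictly to its right, and the frontier then advances by one vertex. The crucial feature is that $S_i$ is a separator of $G$, so once the region $\{v_1,\ldots,v_i\}$ has been cleared the robber cannot cross back through the cops; the robber's feasible set is thus confined to a shrinking suffix of the layout and never grows. Consequently each vertex is cleared exactly once, the frontier advances through all $n$ vertices, and only a constant number of probing rounds is expended per advance, yielding a total of $O(n)$ rounds. This establishes that $G$ is well-localizable, and the main technical point here is verifying monotonicity, namely that the robber can be prevented from re-entering a cleared region so that no vertex is ever re-searched.

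For the second statement I would show that interval graphs satisfy the hypothesis $\mathrm{pw}(G)=\zeta(G)$. The upper bound $\zeta(G)\le\mathrm{pw}(G)$ is immediate from \cite{nisse1}, so the content is the reverse inequality $\zeta(G)\ge\mathrm{pw}(G)$. Reading a path decomposition off an interval representation---ordering the intervals by left endpoint and taking as each bag the set of intervals active at a sweep point---shows $\mathrm{pw}(G)=\omega(G)-1$, where $\omega(G)$ is the clique number; the goal thus becomes a robber strategy surviving against any $\omega(G)-2$ cops. The natural template is the clique lower bound $\zeta(K_m)=m-1$, in which every non-cop vertex returns the same distance vector; here the robber would stay within a maximum clique and keep two candidate vertices indistinguishable. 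I expect this lower bound to be the main obstacle, because a cop placed outside the clique can split the clique's vertices according to their distinct distances into the rest of the graph, so the argument must use the asteroidal-triple-free structure of interval graphs---which forbids three independent ways of reaching the clique from outside---to show that $\omega(G)-2$ cops can never resolve the whole clique, leaving the robber a surviving pair. Once $\mathrm{pw}(G)=\zeta(G)$ is in hand, the general implication finishes the proof.
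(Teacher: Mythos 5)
Your first half is essentially the paper's own argument: the paper also takes the sweep strategy witnessing $\zeta(G)\le\mathrm{pw}(G)$ from \cite{nisse1}, observes that it is monotone (the robber can never enter a bag currently or previously occupied by cops without being located---like you, the paper treats this point informally; note the cops never physically block movement, the claim is informational), and counts at most $n$ rounds, one per bag. So that part matches the paper and is fine.

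The gap is in your treatment of interval graphs, and it is a genuine one. The paper gives no proof here at all---it simply asserts that interval graphs satisfy $\mathrm{pw}(G)=\zeta(G)$---and your attempt to supply the missing argument rests on a claim that is false. You need $\zeta(G)\ge\mathrm{pw}(G)=\omega(G)-1$, to be shown by a robber hiding in a maximum clique, with AT-freeness guaranteeing that $\omega(G)-2$ cops can never resolve that clique. Consider the interval graph on $\{A,B,C,D,x,y\}$ given by $A=[0,1]$, $B=[-2,2]$, $C=[-1,3]$, $D=[-3,4]$, $x=[2.5,5]$, $y=[-2.5,-1.5]$; that is, $\{A,B,C,D\}$ is a clique, $x$ is adjacent exactly to $C,D$, and $y$ is adjacent exactly to $B,D$. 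Here $\omega=4$, and since the graph contains $K_4$ and admits the width-$3$ path decomposition $\{y,B,D\},\{A,B,C,D\},\{x,C,D\}$, we have $\mathrm{pw}=3$. But two cops probing $x$ and $y$ in the first round receive pairwise distinct distance vectors on all six vertices, namely $A\mapsto(2,2)$, $B\mapsto(2,1)$, $C\mapsto(1,2)$, $D\mapsto(1,1)$, $x\mapsto(0,2)$, $y\mapsto(2,0)$, so they capture the robber immediately; and $\omega\ge 3$ forces $\zeta\ge 2$, so $\zeta=2<3=\mathrm{pw}$. Thus $\omega-2$ cops \emph{can} resolve a maximum clique of an interval graph, and the identity $\zeta(G)=\mathrm{pw}(G)$ you are aiming for fails. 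The flaw in the ``only two directions'' intuition is that a probe from the right cuts the clique according to the order of right endpoints, while a probe from the left cuts it according to the order of left endpoints; when intervals are nested these two orders differ, and cuts taken in two different orders can separate all pairs with far fewer than $\omega-1$ probes (above, two probes shatter a $4$-clique).

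Two remarks to salvage what you can. First, your argument does go through for \emph{proper} (unit) interval graphs: there the two endpoint orders coincide, so every probe either isolates the probed vertex or induces a threshold cut of the clique in a single common order, each probe refines the set of clique candidates by at most one class, and hence $\omega-1$ cops are necessary. Second, the example above does not merely break your proof; it contradicts the paper's unproved assertion as well, and when $\zeta(G)<\mathrm{pw}(G)$ the sweep strategy says nothing about $\capt_{\zeta}(G)$, since that parameter is defined using exactly $\zeta(G)$ cops. So the interval-graph clause cannot be deduced from the first clause by your route (or by the paper's); a correct proof that interval graphs are well-localizable would have to bound $\capt_{\zeta}$ by a different argument.
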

\begin{proof} As was shown in \cite{nisse1},  for all graphs $G$, we have that $\zeta(G) \le \mathrm{pw}(G).$ In the proof of this bound, the bags are linearly ordered as $B_i,$ where $1\le i \le m.$ The cops begin by occupying every vertex except one of $B_1,$
which ensures the robber will not start in $B_1$, or they are captured in one round. The cops then move to $B_2,$ occupying all but one vertex, and then $B_3,$ and so on. In this fashion, the robber can never enter a bag presently or one previously occupied by cops.
The robber will be eventually be captured in $B_m$ (given that they are omniscient and will maximize the length of the game).

The cop strategy described in the previous paragraph takes at most $n$ rounds to capture the robber. Hence, if $\mathrm{pw}(G) = \zeta(G),$ then $\mathrm{capt}_{\zeta}(G) \le n.$

Interval graphs satisfy $\mathrm{pw}(G) = \zeta(G),$ and so the final statement of the theorem holds. \end{proof}

We next prove that the LCTC holds for complete $k$-partite graphs. We let $\chi(G)$ denote the chromatic number of $G.$
\begin{theorem}\label{cor: complete kpartite, mod1}
For $k$ a positive integer, let $G$ be a complete $k$-partite graph of order $n,$ with parts $X_{i}$, for $1\leq i \leq k$,
 such that $|X_i| \leq |X_{i+1}|$ for $1 \leq i \leq k-1$.  Let $\rho$ be the number of parts of cardinality $1$. If $|X_{k}| > 1,$ then the following statements hold.
 \begin{enumerate}
 \item If $\rho \geq 1$, then we have that
$$\zeta(G) = n-\chi(G)-|X_{k}|+\rho+1 \text{ and } \capt_\zeta(G) \leq |X_k|-1;$$
\item If $\rho =0$, then we have that
$$\zeta(G) = n-\chi(G)-|X_{k}|+2 \text{ and } \capt_\zeta(G) \leq |X_k|-1.$$
\end{enumerate}
In particular, $\capt_{\zeta}(G) \le n-1.$
\end{theorem}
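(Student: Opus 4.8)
The plan is to prove the two displayed equalities for $\zeta(G)$ by matching upper and lower bounds, to read off the capture-time bound from the cop strategy that realizes the upper bound, and to deduce $\capt_\zeta(G)\le n-1$ from $|X_k|\le n$. First I would record the metric structure: in a complete $k$-partite graph two vertices in distinct parts are at distance $1$ and two distinct vertices of a common part are at distance $2$, so a probe from a cop at $u$ returns $0$ at $u$, $2$ on the remaining vertices of $u$'s part, and $1$ everywhere else. Consequently the vertices of any one part are mutual (false) twins, separable only by a probe placed on one of them. Since $\chi(G)=k$, the two displayed formulas simplify to $\zeta(G)=\sum_{i<k}(|X_i|-1)+\rho$ when $\rho\ge1$ and $\zeta(G)=\sum_{i<k}(|X_i|-1)+1$ when $\rho=0$, and it is these expressions I would target.

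For the upper bound and the capture time at once, I would exhibit a strategy for exactly $\zeta(G)$ cops that wins in at most $|X_k|-1$ rounds. The cops hold a fixed barrier---all but one vertex of each non-singleton part $X_i$ with $i<k$, together with all but one of the $\rho$ singleton parts (vacuous if $\rho=0$)---and use the single remaining cop to probe one new vertex of $X_k$ each round; a count gives exactly $\sum_{i<k}(|X_i|-1)+\max\{0,\rho-1\}+1=\zeta(G)$ cops. A direct check shows each round either captures the robber, identifies it outside $X_k$ (an uncovered non-singleton vertex is flagged by the set of cops reporting distance $2$, and when $\rho\ge1$ the unique uncovered singleton is the only vertex giving the all-ones response), or reveals that the robber lies in $X_k$ with a single cop reporting distance $2$. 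The key observation is that $X_k$ is an independent set: a robber known to be in $X_k$ cannot move to another vertex of $X_k$, and any move out of $X_k$ is immediately caught or identified by the barrier, so the robber is pinned to one vertex of $X_k$. Probing a fresh vertex of $X_k$ each round then shrinks the candidate set inside $X_k$ by one per round, forcing capture after at most $|X_k|-1$ rounds. This yields both $\zeta(G)\le$ the claimed value and $\capt_\zeta(G)\le|X_k|-1$.

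The matching lower bound $\zeta(G)\ge$ the claimed value is the step I expect to be the main obstacle, since it quantifies over all cop strategies. I would give a robber strategy that evades $\zeta(G)-1$ cops forever, maintaining a set $R$ of positions consistent with all responses and $|R|\ge2$ after every round. The driving count is $\zeta(G)-1=\beta(G)-(|X_k|-1)$, with $\beta(G)=n-k+\max\{0,\rho-1\}$ the metric dimension (a standard twin count), so $\zeta(G)-1$ cops never resolve $G$ in one round and in each round some part carries at least two indistinguishable uncovered twins, or there are at least two cop-free vertices; the robber keeps two such candidates. The difficulty is precisely the engine of the winning $\zeta(G)$-cop strategy: a two-element candidate set can be split by one probe, and a robber inside a part cannot shuffle within it. To avoid being swept, after each move the robber must restore a possible set large enough that the next probe again leaves two consistent candidates---immediate when a cop-free part or two cop-free vertices are reachable, since $N[S]=V$ for such a candidate set $S$, but subtler when the cops spend cops to suppress cop-free vertices and thereby must under-cover some part, into whose slack the robber must relocate before its possible set is driven down. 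Proving that this relocation always succeeds against arbitrary cop play, via an invariant controlling both $R$ and the cop distribution, is the crux; the strict inequality $\zeta(G)-1<\beta(G)$ supplies the needed slack each round. With both bounds in hand the displayed equalities follow, and $\capt_\zeta(G)\le|X_k|-1\le n-1$ completes the proof.
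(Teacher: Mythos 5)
Your upper-bound half is correct and is essentially the paper's own argument: the same barrier (all but one vertex of each non-singleton part $X_i$ with $i<k$, all but one singleton) plus a single scanning cop in $X_k$, the same case analysis showing any robber move out of $X_k$ is identified immediately, and the same observation that $X_k$ is independent pins the robber, giving $\capt_\zeta(G)\le |X_k|-1$ with exactly the claimed number of cops. The genuine gap is the lower bound, and you say so yourself: you defer ``proving that this relocation always succeeds against arbitrary cop play, via an invariant controlling both $R$ and the cop distribution'' as ``the crux,'' which means the proposal does not actually establish $\zeta(G)\ge m$. As written, the proof is incomplete precisely at the step that makes the two displayed formulas equalities rather than inequalities.

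Moreover, the difficulty you anticipate largely evaporates in this graph class, which is why the paper's lower-bound argument is only a few lines. Since $G$ is complete multipartite, the robber is adjacent to \emph{every} vertex outside its own part, so the relocation you worry about always succeeds in a single move: with fewer than $m$ cops, a pigeonhole count shows that on each cop turn either (i) some part not containing the robber has at least two vertices that will be cop-free, or (ii) two singleton parts will be cop-free, and the omniscient robber simply moves onto one of the two vertices of such a pair. Indistinguishability is then immediate from the distance structure (both candidates are at distance $2$ from cops in their part and distance $1$ from all other cops, or at distance $1$ from all cops in the singleton case), so no invariant tracking the cop distribution over time is needed; the same argument repeats verbatim every round. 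One further caution about your sketch: your phrase ``or there are at least two cop-free vertices'' is not sufficient as stated --- two cop-free vertices in \emph{different} non-singleton parts are distinguishable by the stationary cops in their respective parts, so the pair must consist of twins, i.e., two cop-free vertices in a common part or two cop-free singletons, exactly as in cases (i) and (ii) above.
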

\begin{proof}
Let $m=n-\chi(G)-|X_{k}|+\rho+1$ if $\rho >0$, and $m= n-\chi(G)-|X_{k}|+2$ if $\rho=0$.
If there are fewer than $m$ cops, then this implies that either: (i) there is a part $X_i$ not containing the robber that will contain at least two cop-free vertices on the next cop turn; or (ii) there will be two parts of cardinality one that are cop-free on the next turn (although this case does not happen if $\rho \leq 1$).

In (i), the robber may move to one of the two vertices of $X_i$ that will be cop-free, and since the cops cannot distinguish these two vertices on the same part, the robber avoids capture.
In (ii), the robber may move to either of the two parts of cardinality one that are cop-free on the next turn, as the cops will not be able to distinguish these two.
Therefore, we have that $\zeta(G) \geq m$.

To show that $\zeta(G) \leq m$, we play with $m$ cops and show that the robber can be captured. We place $\rho-1$ cops on each independent set of cardinality $1$ if $\rho >0$.
There are  $n-\chi(G)-|X_{k}|+2$ cops left to place. We place $|X_i|-1$ cops on each non-singleton set $X_i$ where $i \neq k$. This consists of $n-k-|X_k|+1$ cops, so there is one cop left to place.
Each cop placed so far remains stationary for the rest of play. Finally, we place the last remaining cop on a vertex in $X_k$, which will move during play.

If the robber moves to one of the vertices that contain a cop on each turn, then the robber is captured. If the robber moves to the only part of cardinality $1$ that does not contain a cop, then all cops probe a distance of $1$, which results in a unique candidate and the robber is captured. If the robber moves to a cop-free vertex of a non-singleton part $X_i$, where $i \neq k$, then all cops on that part probe a distance of $2$, and all other cops probe a distance of $1$, which results in a unique candidate and so the robber is captured.

Therefore, the robber will be captured if they ever move from the part $X_k$.
As $X_k$ is an independent set, the robber cannot move between vertices of $X_k$.
On each successive turn, the cop player plays the unique moving cop on a new vertex of $X_k$.
After at most $|X_k|-1$ rounds, the cop player will either occupy the vertex of the robber, or the robber will be known to be on the unique vertex of $X_k$ that the cop has not yet visited.
\end{proof}

\subsection{The monotone property on induced subgraphs}
When studying capture time, it is useful to know how induced subgraphs affect it. We consider whether the capture time is \emph{monotone} on induced subgraphs: that is, if a graph $G$ has an induced subgraph $H$, then $\capt_{\zeta}(H)\leq \capt_{\zeta}(G)$. We show that capture time is monotone on induced subgraphs for a general family of graphs that includes all trees. The latter fact will be useful in the next section.

In general, capture time may fail to be monotone on induced subgraphs. We define $H$ as in Figure~\ref{treet}.
\begin{figure}[H]
     \centering
     \begin{tikzpicture}
\node[draw, circle, fill=black] (1) at (0,0) {};
\node[draw, circle, fill=black] (5) at (-2,-2) {};
\node[draw, circle, fill=black] (6) at (0,-2) {};
\node[draw, circle, fill=black] (7) at (2,-2) {};
\node[draw, circle, fill=black] (2) at (-1,-1) {};
\node[draw, circle, fill=black] (3) at (0,-1) {};
\node[draw, circle, fill=black] (4) at (1,-1) {};

\node[draw, circle, fill=black] (8) at (-2.5,-3) {};
\node[draw, circle, fill=black] (9) at (-1.5,-3) {};
\node[draw, circle, fill=black] (10) at (0.5,-3) {};
\node[draw, circle, fill=black] (11) at (-0.5,-3) {};
\node[draw, circle, fill=black] (12) at (1.5,-3) {};
\node[draw, circle, fill=black] (13) at (2.5,-3) {};

\draw [thick] (1)--(2)--(5)--(8)--(5)--(9);
\draw [thick] (1)--(3)--(6)--(10)--(6)--(11);
\draw [thick] (1)--(4)--(7)--(12)--(7)--(13);

\end{tikzpicture}
     \caption{Graph $H$.}
     \label{treet}
 \end{figure}
By \cite{seager2} we know that $\zeta(H)=1$. Furthermore, it is straightforward to check that $\capt_{\zeta}(H) = 3$. The graph $H$ is an induced subgraph of the Cartesian grid $G=P_8 \Box P_8$. Label the vertices of $G$ by $\{(i,j):1\le i,j \le 8 \}.$ Placing cops on $(1,1)$ and $(1,8)$ results in a capture in one round. Hence, $\zeta(G)=2,$ and $\mathrm{capt}_{\zeta}(G)=1.$

We consider certain subgraphs where the capture time is monotone on induced subgraphs in a strong sense, where it holds for all $k$ greater than the localization number.  We say that an induced subgraph $H$ of $G$ is \emph{special} if $H$ is connected and the only paths in $G$ between distinct vertices in $H$ are entirely contained in $H.$ Note that we assume here that paths do not repeat vertices or edges.
\begin{theorem} \label{lem:GnH}
Let $k \ge \zeta(G)$ be an integer. If $G$ is a connected graph and $H$ is a special induced subgraph of $G,$ then
$$\capt_{\zeta,k}(H) \leq \capt_{\zeta,k}(G).$$
\end{theorem}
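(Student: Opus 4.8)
The plan is to fix an optimal $k$-cop strategy on $G$ and have the cops on $H$ \emph{simulate} it, translating each probe that the $G$-strategy would make into a probe that the cops on $H$ can actually perform. The feasibility of this translation rests on two distance facts that I would establish first. The first is that $H$ is isometrically embedded: for $u,v \in V(H)$, any shortest $u$--$v$ path in $G$ is a path between vertices of $H$, hence lies in $H$ by specialness, so $d_G(u,v)=d_H(u,v)$. The second, and the crux, is a \emph{gateway} formula: for every $w \in V(G)\setminus V(H)$ there are a single vertex $a(w)\in V(H)$ and a constant $c_w = d_G(w,a(w))$ such that $d_G(w,r) = c_w + d_H(a(w),r)$ for all $r \in V(H)$.

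To prove the gateway formula I would argue at the level of the components of $G-V(H)$. If $C$ is such a component and two distinct vertices $a,b \in V(H)$ each had a neighbour in $C$, then joining these neighbours through $C$ would yield an $a$--$b$ path whose internal vertices all lie in $C$, hence outside $H$; this contradicts specialness. Thus each component $C$ attaches to $H$ at a unique vertex $a_C$, and since $G$ is connected this attachment vertex exists. Consequently, every path from a vertex $w\in C$ to any $r\in V(H)$ must pass through $a_C$, giving $d_G(w,r)=d_G(w,a_C)+d_H(a_C,r)$ with $d_G(w,a_C)$ independent of $r$; I would set $a(w)=a_C$ and $c_w=d_G(w,a_C)$.

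With these facts in hand, the simulation is routine. Let $\mathcal{S}$ be a $k$-cop strategy on $G$ realizing $t:=\capt_{\zeta,k}(G)$, which exists as $k\ge\zeta(G)$. Extend the translation by setting $a(w)=w$ and $c_w=0$ for $w\in V(H)$. In each round, when $\mathcal{S}$ prescribes cop vertices $w_1,\dots,w_k$, the cops on $H$ instead probe $a(w_1),\dots,a(w_k)$, read off $d_H(a(w_i),r)$, and reconstruct $d_G(w_i,r)=c_{w_i}+d_H(a(w_i),r)$; they feed this vector back to $\mathcal{S}$. Since the robber in the $H$-game moves only along edges of $H$, which are also edges of $G$, its play is a legal robber play in $G$ that happens to remain in $V(H)$, and the reconstructed vectors are exactly the distance vectors $\mathcal{S}$ would see against this robber. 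Hence $\mathcal{S}$ locates the robber within $t$ rounds, and because the robber sits in $V(H)$ the located vertex lies in $H$, so the cops on $H$ capture. This exhibits a $k$-cop strategy on $H$ (in particular witnessing $\zeta(H)\le k$, so that $\capt_{\zeta,k}(H)$ is defined) whose capture time is at most $t$, giving $\capt_{\zeta,k}(H)\le\capt_{\zeta,k}(G)$.

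I expect the main obstacle to be the gateway formula—specifically, verifying that the attachment vertex of each component of $G-V(H)$ is unique and that this forces every path from outside $H$ into $H$ to funnel through it, so that the probe translation is independent of the robber's (unknown) position. Once that structural fact is pinned down, faithfulness of the simulation and the round count follow directly from $\mathcal{S}$ being winning in at most $t$ rounds against every robber, in particular those confined to $H$.
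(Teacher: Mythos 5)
Your proof is correct and takes essentially the same approach as the paper's: both rest on the structural fact that everything outside $H$ attaches to $H$ through a single vertex (the paper's retraction $f$, your gateway map $a(\cdot)$), giving the distance identity $d_G(w,r)=d_G(w,a(w))+d_H(a(w),r)$ for robbers confined to $H$, and both use it to translate probes so that an optimal $k$-cop strategy on $G$ yields one on $H$. The only difference is presentational: the paper packages the translation as a shadow-cop argument by contradiction (through an auxiliary game in which the robber is confined to $H$), whereas you run the simulation directly, which is the contrapositive of the same argument.
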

Note that each $2$-connected component (or block) of $G$ is a special induced subgraph. Hence, by Theorem~\ref{lem:GnH}, $\capt_{\zeta,k}(G)$ is bounded below by the maximum capture time of any $2$-connected component.

Before we begin the proof of Theorem~\ref{lem:GnH}, we recall some notation. An induced subgraph $H$ is a \emph{retract} of $G$ if there is a homomorphism $f: G \rightarrow H$ such that $f(x)=x$ for $x\in V(H);$ that is, $f$ is the identity on $H.$ The map $f$ is called a \emph{retraction}.
\begin{proof}[Proof of Theorem~\ref{lem:GnH}.]
If we consider the graph formed by removing the edges of $H$ from $G$, then the graph is composed of a number of  components, say $X_1, X_2,\ldots, X_\alpha$, each of which contains exactly one vertex in $V(H)$. We note that any $X_i$ cannot contain two vertices in $V(H)$, or else a path exists between the two vertices that is not entirely contained within $H$. Further, there cannot be a $X_i$ with no vertices in $V(H)$, or else $G$ must not have been connected. We let $v_i$ denote the unique vertex of $V(H)$ that is contained in $X_i$. See Figure~\ref{fig:GnH}.

\begin{figure}[h]
\centering
\begin{tikzpicture}
 \draw (0,0) circle (50pt);
 \draw (70pt,0) circle (20pt);
 \draw (-90pt,0) circle (40pt);
 \draw (0,70pt) circle (20pt);
 \draw (0,-70pt) circle (20pt);
 \draw (0,70pt) node {$X_1$};
 \draw (0,-70pt) node {$X_2$};
 \draw (70pt,0) node {$X_3$};
 \draw (-90pt,0) node {$X_4$};
 \fill (50pt,0) circle (3pt);
 \fill (-50pt,0) circle (3pt);
 \fill (0,50pt) circle (3pt);
 \fill (0,-50pt) circle (3pt);
 \draw (-40pt,-5pt) node {$v_4$};
 \draw (40pt,0) node {$v_3$};
 \draw (0,-40pt) node {$v_2$};
 \draw (0,40pt) node {$v_1$};
 \draw (0,0) node {$H$};
 \draw (-80pt, 10pt) node {$u$};
 \fill (-70pt,10pt) circle (3pt);
 \draw[snake=zigzag] (-70pt,10pt) -- (-50pt,0pt);
 \draw (-20pt, 10pt) node {$v$};
 \fill (-30pt,10pt) circle (3pt);
 \draw[snake=zigzag] (-30pt,10pt) -- (-50pt,0pt);
\end{tikzpicture}
\caption{A special induced subgraph of $G$.}
\label{fig:GnH}
\end{figure}
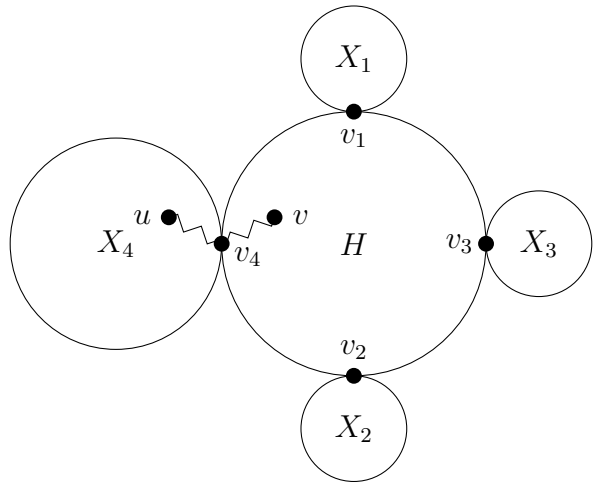
For each $u \in X_i$ and $v \in V(H)$, any path connecting $u$ to $v$ must contain $v_i$. Further, a minimal path between $u$ and $v$ must be composed of a minimal path from $u$ to $v_i$ and a minimal path from $v_i$ to $v$.
Hence,
$$
d(u,v) = d(u,v_i) + d(v_i,v).
$$
For $d\geq d(u,v_i)$, those vertices in $N^d(u) \cap V(H)$ are exactly those vertices in $N^{d-d(u,v_i)}(v_i) \cap V(H)$, where $N^d(u)$ denotes the set of vertices of distance $d$ from $u$.
Define the function $f:V(G) \rightarrow V(H)$ by $f(u) = v_i$ if $u \in X_i$ and by $f(u)=u$ if $u \in V(H)$. Note that this mapping is a retraction $f$ onto $H$ if we consider $G$ to be reflexive (that is, each vertex has a loop). Making the assumption that $G$ is reflexive has no impact on distances in the graph, and so we assume it without loss of generality for the remainder of the proof.

Suppose the robber restricts to playing in $H$, the cop knows the robber is in $H,$ and the cops may probe any vertex in $G.$ In this variant of the localization game played on $G,$ let $\capt_{\zeta,k}(G,H)$ be the resulting capture time. Note that $\capt_{\zeta,k}(G,H) \le \capt_{\zeta,k}(G).$
We show that $\capt_{\zeta,k}(H) \leq \capt_{\zeta,k}(G,H)$, which will be enough to complete the proof.

We employ a \emph{shadow strategy}, where the primary game is in $G$, and the secondary game is in $H$. The images of a cop $C \in V(G)$ onto $H$ are referred to as \emph{shadow cops} and written $f(C).$ Suppose that in round $t-1$, after the cop has moved, the cops knew the robber was on the vertex set $V_{t-1}$.
At time $t$, the cops play on vertices $C_1, C_2,\ldots, C_m$ in the primary game and as a result, the cop receives a distance vector $(d_1, d_2,\ldots, d_m)$.
The cop $C_i$ finds that the robber is on $N^{d_i}(C_i) \cap V(H)$ as a result of its probe, since it is known that the robber is on $V(H)$.
Therefore, the cop exactly knows that the robber is on the vertex set $$V_{t} = \big(N[V_{t-1}] \cap V(H)\big) \cap \bigcap_{i=1}^m \big(N^{d_i}(C_i) \cap V(H)\big),$$ where $N[V_{t-1}]$ is any vertex in $V_{t-1}$ or a neighbor of a vertex in $V_{t-1}$.
We previously showed in $G$ that $N^d(u) \cap V(H)=N^{d-d(u,f(u))}(f(u)) \cap V(H)$, so therefore, we may rewrite this as $$V_{t} = \big(N[V_{t-1}] \cap V(H)\big) \cap \bigcap_{i=1}^m \big(N^{d_i-d(C_i,f(C_i))}(f(C_i)) \cap V(H)\big).$$

Suppose for the sake of contradiction that on the primary game, the cop can capture the robber in $\capt_{\zeta,k}(H)-1$ rounds.
Suppose in round $t$, the cops probe vertices $C_1, C_2,\ldots, C_m$.
The robber translates this to the secondary game by playing shadow cops on vertices in $H$, yielding a distance vector $(d'_1, d'_2,\ldots, d'_m)$.
In round $t$, the shadow cops know that the robber is on the set $V_t'$.
The shadow cop $f(C_i)$ finds that the robber is on $N^{d'_i}(f(C_i)) = N^{d'_i}(f(C_i)) \cap V(H)$ as a result of its probe.
Therefore, the shadow cop exactly knows that the robber is on the vertex set $$V'_{t} = \big(N[V'_{t-1}]\big) \cap \bigcap_{i=1}^m  \big(N^{d'_i}(f(C_i)) \big)= \big(N[V'_{t-1}] \cap V(H)\big) \cap \bigcap_{i=1}^m \big(N^{d'_i}(f(C_i)) \cap V(H)\big).$$
By noting that $d_i' = d_i - d(C_i,f(C_i))$, we have that $V_i = V'_i$.
Consequentially, as $|V_t|=1$ only when $|V'_t|=1$, we have that the robber is captured in the secondary game in $\capt_{\zeta,k}(H)-1$.
This is a contradiction, as we require at least $\capt_{\zeta,k}(H)$ rounds to capture the robber with $k$ cops on $H$. Therefore, in the primary game, the cop requires at least  $\capt_{\zeta,k}(H)$ rounds to capture the robber. We then have that $\capt_{\zeta,k}(H) \leq \capt_{\zeta,k}(G,H)$, as required.
\end{proof}

The connected induced subgraphs of trees are always special, and so Theorem~\ref{lem:GnH} gives the following.
\begin{corollary}\label{tcor}
Capture time is monotone on trees. In particular, if $G$ is a tree with subtree $H,$ then for all integers $k \ge \zeta(G)$, $\capt_{\zeta,k}(H) \le \capt_{\zeta,k}(G).$
\end{corollary}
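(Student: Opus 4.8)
The plan is to deduce the corollary directly from Theorem~\ref{lem:GnH}, for which the only thing left to establish is that every subtree $H$ of a tree $G$ is a \emph{special} induced subgraph in the sense defined above. Recall that $H$ is special when $H$ is connected and every path in $G$ joining two distinct vertices of $H$ lies entirely in $H$.

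First I would invoke the defining feature of trees: between any two vertices of a tree there is a unique path, and this path repeats neither vertices nor edges. Let $H$ be a subtree of $G$, which by definition is a connected induced subgraph, so the connectivity requirement in the definition of special is immediate. Next, take distinct vertices $u, v \in V(H)$. Since $H$ is connected, there is a path $P$ from $u$ to $v$ whose vertices and edges all lie in $H$; as $H$ is a subgraph of $G$, this same $P$ is also a path in $G$. By uniqueness of paths in the tree $G$, the path $P$ is the only path in $G$ from $u$ to $v$, and it is contained in $H$ by construction. Hence $H$ satisfies both conditions and is special.

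With the special property in hand, I would apply Theorem~\ref{lem:GnH} to the connected graph $G$, its special induced subgraph $H$, and the given integer $k \ge \zeta(G)$, obtaining $\capt_{\zeta,k}(H) \le \capt_{\zeta,k}(G)$. This is precisely the asserted monotonicity, and setting $k = \zeta(G)$ recovers the statement for the unprescribed parameter $\capt_{\zeta}$.

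I do not expect any genuine obstacle here: the entire content of the argument is the verification that subtrees are special, and this is forced by the unique-path property of trees. The one point to handle carefully is matching the definition exactly, namely that the path $P$ uses no repeated vertices or edges so that it qualifies as a path in the intended sense; this holds automatically inside a tree, so the reduction to Theorem~\ref{lem:GnH} goes through without friction.
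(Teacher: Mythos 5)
Your proposal is correct and matches the paper's own argument, which likewise observes that connected induced subgraphs of trees are special (via the unique-path property) and then invokes Theorem~\ref{lem:GnH}. Your write-up merely spells out the uniqueness-of-paths verification that the paper leaves implicit, so there is nothing to fix.
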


Note that for $k$ larger than the localization number, we may find graphs where the capture time when playing with $k$ cops is 1, but is unbounded for induced subgraphs. Consider the hypercube of dimension $n\ge 0$, written $Q_n$. As proved in \cite{BK}, $\loc{Q_n} \le \ceil{\log_2 n} + 2.$ If we label the vertices of $Q_n$ by binary $n$-tuples, we can place a cop on each of the vertices whose label contains at most one $1$ to see that $\capt_{\zeta, n+1}(G)=1$. However, $Q_n$ is bipartite and so has an independent set $X$ of order $2^{n-1}$. If $H=G[X]$, then $\capt_{\zeta, n+1}(H)= \lceil \frac{2^{n-1}-1}{n+1} \rceil.$

\section{Trees}
Trees have localization number either 1 or 2, as proved in \cite{seager2}. As we proved in Theorem~\ref{treet}, trees are well-localizable. We refine bounds on the capture time of trees in the overprescribed setting (that is, where the number of cops is more than the localization number), presenting bounds that are functions of the maximum degree.

When studying the capture time of trees, leaves play a special role.
\begin{lemma} \label{lem:all_leaves_win}
For an integer $j\geq 2$, if $T$ is a tree with $j$ leaves, then we have that $\mathrm{capt}_{\zeta,j}(T) =1$.
\end{lemma}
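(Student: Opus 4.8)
The plan is to show that placing the $j$ cops on the $j$ leaves of $T$ resolves the robber's position after a single probe; in the language of metric dimension, the set of leaves is a resolving set for $T$. Since every tree on at least two vertices has at least two leaves, the hypothesis $j \ge 2$ is consistent with placing exactly one cop on each leaf, using all $j$ cops. Because $\zeta(T) \le 2 \le j$, the parameter $\capt_{\zeta,j}(T)$ is well-defined, and because the robber's initial vertex is unknown, at least one probe is always required, so $\capt_{\zeta,j}(T) \ge 1$ trivially. It therefore suffices to prove the upper bound $\capt_{\zeta,j}(T) \le 1$, which follows once we show the distance vector read off from the leaves determines the robber's vertex uniquely.

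The heart of the argument is the claim that for any two distinct vertices $u, v \in V(T)$ there is a leaf $\ell$ with $d(\ell,u) \neq d(\ell,v)$. To prove this I would consider the unique $u$--$v$ path and walk away from $v$ starting at $u$: if $u$ is already a leaf, take $\ell = u$; otherwise repeatedly follow an edge at the current vertex other than the one leading back toward $v$, continuing until a leaf $\ell$ is reached. By construction the unique $\ell$--$v$ path passes through $u$, so $d(\ell,v) = d(\ell,u) + d(u,v)$. Since $u \neq v$ forces $d(u,v) \ge 1$, we obtain $d(\ell,u) < d(\ell,v)$, and hence $\ell$ distinguishes $u$ from $v$.

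With the claim established, the conclusion is immediate: in the first round the cops probe from all $j$ leaves $\ell_1, \dots, \ell_j$ and receive the distance vector $(d(\ell_1,r), \dots, d(\ell_j,r))$, where $r$ is the robber's vertex. By the claim no other vertex produces this same vector, so the robber's location is determined after one probe. This gives $\capt_{\zeta,j}(T) \le 1$, and combined with $\capt_{\zeta,j}(T) \ge 1$ we conclude $\capt_{\zeta,j}(T) = 1$.

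I expect the main (and essentially only) obstacle to be the walking argument inside the claim, namely verifying that one can always extend from $u$ in the direction away from $v$ until reaching a leaf whose path back to $v$ still passes through $u$. This is exactly where the tree structure is indispensable: the absence of cycles guarantees that the extended walk never loops back toward $v$ and must terminate at a leaf. Some care is warranted in the boundary cases where $u$ is itself a leaf or where $u$ and $v$ are adjacent, but both are handled directly by the construction above.
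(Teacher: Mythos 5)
Your proof is correct and uses the same opening move as the paper (all $j$ cops placed on the leaves), but the justification that a single probe suffices is genuinely different. The paper argues constructively: every vertex of a tree lies on the unique shortest path between some pair of leaves, so there exist two cops $C_1,C_2$ whose probed distances satisfy $d_1+d_2=d(C_1,C_2)$, and the robber must then be the unique vertex of that path at distance $d_1$ from $C_1$ --- an explicit decoding rule that uses only two coordinates of the distance vector. You instead prove that the leaf set is a resolving set: for distinct $u,v$, a walk from $u$ away from $v$ terminates at a leaf $\ell$ whose unique path to $v$ passes through $u$ (deleting the first edge of that walk separates $\ell$ from $v$, which is exactly why the tree structure is indispensable), whence $d(\ell,v)=d(\ell,u)+d(u,v)>d(\ell,u)$. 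This pairwise-distinguishing argument establishes injectivity of the map sending a vertex to its distance vector, i.e., $\beta(T)\le j$, so your conclusion also follows from the paper's earlier remark that $\capt_{\zeta,k}(G)=1$ when $k=\beta(G)$, combined with temporal speed-up. What each approach buys: the paper's proof is algorithmic, telling the cops how to compute the robber's location from just two well-chosen probe values; yours is more structural, yields the stronger and reusable fact that the leaves of a tree form a resolving set, and explicitly handles the lower bound $\capt_{\zeta,j}(T)\ge 1$, which the paper leaves implicit.
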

\begin{proof}
The cops play on the $j$ leaves. Each vertex on a tree is contained in a shortest path connecting two leaves. This implies that the robber, if not on a leaf and captured immediately, is on the shortest path connecting two cops, say $C_1$ and $C_2$. Suppose that cop $C_i$ has distance $d_i$ to the robber, for $i=1,2$. These two cops can identify that the robber is on the shortest path connecting $C_1$ and $C_2$ as $d_1+d_2 = d(C_1,C_2)$. The robber is identified to be on the unique vertex on the shortest path connecting $C_1$ and $C_2$ that is distance $d_1$ from $C_1$.
\end{proof}

For the proof of the next lemma, and other upcoming results, we need the following definition. Let $T$ be a rooted tree with root $v$. We call a vertex $a$ a \emph{descendant} of vertex $b$ in $T$ if $a$ is a neighbor of $b$, and $b$ sits on the unique path between $a$ and the root $v$.

When there are two or more cops we may assume without loss of generality that they play exclusively on the leaves.
\begin{lemma} \label{lem:cop2leaf}
If there are at least two cops playing the localization game on a tree $T$, then it never hinders the cops to be on leaves.
\end{lemma}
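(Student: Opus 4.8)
The plan is to give an exchange argument: starting from any cop strategy that captures the robber in $\capt_{\zeta,k}(T)$ rounds, I will convert it, one probe-location at a time, into a strategy in which every probe is made at a leaf, without ever increasing the number of rounds. The engine of the conversion is a single geometric fact about trees. Suppose a cop probes a non-leaf vertex $u$; let $w$ be a neighbour of $u$ and let $T_w$ be the component of $T-u$ containing $w$, and pick any leaf $\ell \in T_w$. For every robber position $r \notin T_w$ the unique $\ell$--$r$ path passes through $u$, so $d(\ell,r) = d(\ell,u) + d(u,r)$. Since $d(\ell,u)$ is a constant known to the cops, the probe at $\ell$ conveys exactly the same information about $r$ as the probe at $u$. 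Thus a leaf probe in the direction of $w$ reproduces the non-leaf probe verbatim, \emph{except} possibly for robber positions lying inside $T_w$.

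First I would formalise the conversion as a shadow strategy in the spirit of Theorem~\ref{lem:GnH}: the primary game is played on $T$ with the original (possibly non-leaf) probes, and in the shadow game each non-leaf probe $u$ is replaced by a leaf $\ell$ as above, maintaining the invariant that the shadow candidate set is contained in the true candidate set after every round. In the favourable case where some subtree direction of $u$ carries no current candidate robber location, I place the replacement leaf in that direction; by the displayed identity the probe at $\ell$ then refines the candidate set exactly as the probe at $u$ did, so a single leaf suffices and the invariant is preserved automatically. The content of the lemma is therefore concentrated in the complementary case.

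The main obstacle is exactly the subtree $T_w$ toward the chosen leaf, where $d(\ell,\cdot)$ need not refine $d(u,\cdot)$, and this case can genuinely arise when candidate positions sit in every subtree direction of $u$. Here I would invoke the hypothesis $k \ge 2$ together with the fact (already used in Lemma~\ref{lem:all_leaves_win}) that $u$ lies on a shortest path between two leaves $\ell,\ell'$ belonging to two \emph{distinct} components of $T-u$. Because these components are disjoint, a probe at $\ell$ recovers $d(u,\cdot)$ off the $\ell$-component while a probe at $\ell'$ recovers it off the $\ell'$-component, so the pair jointly recovers $d(u,\cdot)$ on all of $T$. The delicate point, which I expect to be the crux of a full proof, is to argue that the replacement leaves for the $k$ cops can be selected so that every subtree direction carrying a live candidate is covered by some cop's probe, using only the $k$ cops available and respecting the round-by-round shadow invariant. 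Once that selection is justified, each round of the resulting leaf-only strategy narrows the candidate set at least as quickly as the original, so the robber is still caught within $\capt_{\zeta,k}(T)$ rounds; hence we may assume all cops play on leaves, which is precisely the assertion that being on leaves never hinders the cops.
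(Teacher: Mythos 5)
Your reduction to the problematic case is right, and your geometric identity $d(\ell,r)=d(\ell,u)+d(u,r)$ for $r$ outside the component $T_w$ is exactly the right tool, but the proposal has a genuine gap at precisely the point you flag as ``the crux'': the mechanism you sketch for the hard case does not respect the cop budget. Replacing one probe at a non-leaf $u$ by a \emph{pair} of leaf probes $\ell,\ell'$ in distinct components of $T-u$ costs two probes per replaced probe; in a round where several (possibly all $k$) cops sit on non-leaf vertices each of whose subtree directions carries a live candidate, you would need up to $2k$ leaf probes, and no selection argument is offered for why $k$ always suffice. Nothing in the proposal rules out this bad configuration, so the round-by-round invariant you describe cannot be closed as stated.

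The paper closes exactly this gap with a sharper observation that makes a \emph{single} replacement leaf suffice: instead of a second fresh leaf, anchor on another cop's \emph{existing} probe. If $C_1$ probes a non-leaf $u_1$ and $C_2$ probes $u_2$ (wherever it happens to be), root $T$ at $u_2$ and choose the replacement leaf $u_1'$ to be a leaf below $u_1$, so that $u_1$ lies on the $u_1'$--$u_2$ path $P'$. For \emph{every} robber position $R$, the two distances $d(u_1',R)$ and $d(u_2,R)$ determine the vertex $x$ of $P'$ closest to $R$, via
\[
d(x,R)=\bigl(d(u_1',R)+d(u_2,R)-d(u_1',u_2)\bigr)/2,
\]
and since $u_1\in P'$ we get $d(u_1,R)=d(u_1,x)+d(x,R)$, a quantity the cops can compute. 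Thus the modified probe set recovers $d(u_1,R)$ exactly for all $R$---including robber positions in the subtree containing $u_1'$, which is the case your single-leaf identity cannot handle---so one cop at a time can be moved to a leaf with no loss of information and no extra probes, and the capture time does not increase. This is where the hypothesis of at least two cops does its work (the anchor cop must exist), rather than in your pairing of two leaves in distinct components. With that replacement step, your exchange argument goes through verbatim, iterating until all cops are on leaves.
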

\begin{proof}
Suppose that in some round, there is a cop $C_1$ probing a vertex $u_1$ that is not a leaf of $T$. Let the other cops probe vertices $u_2, u_3, \ldots, u_k,$ respectively.
Say that the robber is on vertex $R$.

Root the tree at $u_2$, and let $u_1' \ne u_1$ be a leaf that is a descendant of $u_1$. Suppose $C_1$ is instead placed on $u_1'$ (and all other cops remain where they were). Let $P'$ be the shortest path between $u_1'$ and $u_2$ and let $P$ be the shortest path between $u_1$ and $u_2$. Observe that $P \subseteq P'$. Between them, cops $C_1$ and $C_2$ can determine the vertex $x$ on $P'$ that is closest to the robber. To see this, note that $$d(R,x) = \left(d(u_1',R) + d(u_2,R) - d(u_1',u_2)\right)/2.$$
From this, it follows that
$$d(u_1',x) = d(u_1',R) - d(R,x) =  \left(d(u_1',R) - d(u_2,R) + d(u_1',u_2)\right)/2$$ and this, together with knowing it lies on $P'$, is enough to determine $x$.

Supposing $C_1$ probes $u_1'$ and not $u_1$, we show that the cops know $d(u_1,R)$.
Whether $x \in P$ or $x \in P' \setminus P$, we have $d(u_1,R) = d(u_1,x) + d(x,R)$. Since the cops know $x$ and $d(x,R)$, the cops can still determine $d(u_1,R)$. Thus, playing cop $C_1$ at leaf $u_1'$ is  at least as good as playing the cop at $u_1$.
We may repeat this argument until all cops are on leaves.
\end{proof}

Throughout the remainder of the section, we let $m$ and $h$ be positive integers. The \emph{perfect $m$-ary} tree $\T_m^h$ of height $h$ has $h+1$ levels labeled $0,1,\ldots, h$. The $0$th level contains one vertex, the root, which has $m$ neighbors. For $0 < i < h$, every vertex in the $i$th level has exactly one neighbor in level $i-1$ and $m$ neighbors in level $i+1$. In particular, the $i$th level contains $m^i$ vertices all at distance $i$ from the root. These are naturally arising structures; for example, there is a natural bijection between the vertices of $\T_m^h$ and the set of  sequences of length at most $h$ with entries in $\{1,2,\ldots,m\}$, where two vertices are adjacent if one of the corresponding sequences is equal to the other sequence with one entry appended at the end.

Note that the perfect $m$-ary tree $\T_m^h$ has $m^h$ leaves. Also, for any $0 \le i \le h$ the induced subgraph on all vertices at distance $i$ or more from the root consists of $m^i$ copies of $\T_m^{h-i}$.

We will prove upper and lower bounds on the capture time for perfect $m$-ary trees.  Our analysis splits into two cases, depending on whether the number of cops is less than or greater than $m$.

We begin with a result that is effective when the number of cops $k$ is less than $m$.
\begin{lemma} \label{lem:large_m_lower_bound}
For $k\ge 2$ an integer, we have that
\[\capt_{\zeta,k}(\T_m^h ) \ge h \floor{ \frac{m-1}{k}} .\]
\end{lemma}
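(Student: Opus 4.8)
The plan is to exhibit an explicit robber (adversary) strategy and argue that the cops cannot pin down the robber's location for at least $h\floor{(m-1)/k}$ rounds. I will work with the bijection noted above between the leaves of $\T_m^h$ and the strings in $\{1,\dots,m\}^h$, where a leaf $u$ records the branch choices along the root-to-$u$ path. The one fact I need about distances is that for leaves $u,u'$ and any probe vertex $\ell$, we have $d(\ell,u)=d(\ell,u')$ whenever $\ell$ lies outside the smallest subtree containing both $u$ and $u'$; put differently, a single probe separates leaves only according to where the probing vertex branches away from them. The whole argument rests on this locality of information, and it will be convenient that it holds for an arbitrary probe position, so no reduction to probing leaves is needed.

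First I would isolate the key one-round observation. Fix an internal vertex $w$ at depth $p$, whose subtree is a copy of $\T_m^{h-p}$, and consider the $m$ children of $w$ (the branches at level $p+1$). I claim a single cop probe can \emph{clear} at most one of these branches: if the probing vertex lies inside branch $j$ of $w$, equals $w$, or lies entirely outside the subtree of $w$, then all full-depth leaves lying in branches $j'\neq j$ are equidistant from that probe and hence mutually indistinguishable by it. Consequently, in any single round the $k$ cops together clear at most $k$ of the $m$ branches of $w$, and two full-depth leaves sitting in distinct un-cleared branches remain indistinguishable after that round. I expect this step to be routine once the distance formula is written out, but it is the engine of the proof.

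Next I would describe the robber's descent together with the bookkeeping that makes the rounds add up. The robber maintains a committed prefix $P=s_1\cdots s_p$ (initially empty) and answers every probe as if the robber diverges from it at the current active level $p+1$; that is, it asserts the robber sits deep in some branch of $w$ that no cop has entered. It maintains this while at least two branches of $w$ remain un-cleared, so the candidate set still contains $\ge 2$ indistinguishable deep leaves. Only when the cops have cleared all but one branch is the robber forced to commit $s_{p+1}$, whereupon it descends into that last un-cleared branch and repeats on the fresh copy of $\T_m^{h-p-1}$ below it. The count then runs as follows: forcing a single coordinate requires clearing $m-1$ of the $m$ branches at the active level, and at most $k$ are cleared per round, so each of the $h$ coordinates costs at least $\ceil{(m-1)/k}\ge\floor{(m-1)/k}$ rounds; multiplying by the $h$ levels gives the bound, and capture (which needs the full length-$h$ string determined) cannot occur before round $h\floor{(m-1)/k}$.

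The step I expect to be the main obstacle is justifying that the cops receive \emph{no credit across levels}: when the robber commits $s_{p+1}$ to the last un-cleared branch, that branch was by definition never entered by any probe, so no response has revealed anything about coordinates $p+2,\dots,h$, and the robber descends into a genuinely fresh subtree with all $m$ of its branches un-cleared. Making this precise amounts to verifying that the robber's answering convention is globally consistent (some honest leaf realizes every answer given so far, so the candidate set is nonempty and in fact has size $\ge 2$ until all coordinates are forced) and that any probe confined to already-cleared or exterior branches conveys nothing about the active or deeper levels. Once this locality is nailed down the $h$ per-level counts are independent and simply sum. I would also remark that the bound is only meaningful when $k<m$; for $k\ge m$ the floor vanishes and the statement is vacuous, matching its intended use in the regime where the number of cops is smaller than the branching factor.
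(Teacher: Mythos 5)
Your one-round observation is correct and your level-by-level framework is reasonable, but the step you yourself flagged as the main obstacle --- that the cops get ``no credit across levels'' --- is exactly where the argument breaks, and your justification of it is false. Your robber strategy is only specified when the cops reduce the number of un-cleared branches at the active level from at least two down to exactly one, at which point you conclude the robber ``descends into a genuinely fresh subtree.'' But the cops are not obliged to leave one branch unprobed: once at most $k$ un-cleared branches remain, they can probe \emph{all} of them in a single round. Then every branch the robber could claim contains a probe; the robber must commit to a probed branch and diverge from that probe below the branch root, so that probe pre-clears a branch at the \emph{next} level. Crucially, this costs the cops nothing extra: whenever $m \not\equiv 1 \pmod{k}$ we have $\ceil{m/k}=\ceil{(m-1)/k}$, so sweeping all $m$ branches of a fresh level takes the same number of rounds as sweeping $m-1$ of them. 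Concretely, take $m=6$, $k=2$: the cops probe inside branches $\{1,2\}$, $\{3,4\}$, $\{5,6\}$ in three rounds, placing the round-3 probes strictly below the branch roots; the robber must commit to (say) branch $5$ with a probe inside it, so the next level starts with only $5$ un-cleared branches and can be finished in $\ceil{4/2}=2$ rounds, leaving one branch unprobed. This violates your claim that every coordinate costs at least $\ceil{(m-1)/k}=3$ rounds. Since your final bound is deduced from that per-level claim, the proof has a genuine gap. The floor bound can probably be rescued within your framework, but only by actually quantifying the cross-level credit (the robber commits to a least-probed branch, so at most $\floor{k/2}$ branches of the next level are pre-cleared, credits do not accumulate deeper because the robber diverges at the top, and one checks $\ceil{\frac{m-1-\floor{k/2}}{k}} \ge \floor{\frac{m-1}{k}}$); this is precisely the analysis you waved through.

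The paper sidesteps this issue entirely by a different bookkeeping. It cuts the game into blocks of exactly $\floor{(m-1)/k}$ rounds, fixed in advance by the round count rather than by the cops' clearing progress. Within one block the cops make at most $k\floor{(m-1)/k} \le m-1$ probes in total, so some branch of the current root receives no probe during the \emph{entire} block, and every probe made in the block is equidistant from all deep leaves of that branch. At the end of the block the robber \emph{voluntarily announces} which subtree it occupies (this only helps the cops), and the argument recurses on a genuinely fresh copy of $\T_m^{h-1}$. Because level transitions are triggered by the clock and the robber reveals the subtree for free, no probe can ever straddle two levels, and the $h$ blocks of $\floor{(m-1)/k}$ rounds sum with no amortization needed. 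If you want to keep your reactive, information-withholding robber, you must supply the credit analysis above; the simpler repair is to adopt the paper's block structure.
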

\begin{proof}
We will show that the robber can avoid capture if less than  $h \floor{ (m-1)/k}$ rounds are played. We assume that the robber picks a leaf and remains on that leaf for the remaining rounds, and the cops know that the robber is employing this strategy. The robber has other options but if the cops cannot capture the robber in fewer rounds in this case, they will not be able to do so when the robber has more freedom.

Let $T_1, T_2,\ldots, T_{m}$ denote the $m$ rooted subtrees formed by deleting the root vertex $u$, with the root of each such subtree being the vertex that was adjacent to $u$.
Each cop probes at most one of these subtrees in the first round.
During the first $\floor{ (m-1)/k}$ rounds, at most $m-1$ of these subtrees was visited by a cop.
The robber may therefore have chosen to start on a leaf of one of the subtrees, say $T_m$, that was not visited by a cop in these first $\floor{ (m-1)/k}$ rounds.
There may have been multiple subtrees that were not visited by the cops, so suppose that the robber tells the cop that they are on $T_m$ after these  $\floor{ (m-1)/k}$ rounds.
 This only serves to reduce the capture time.

Note that the distance from a cop on vertex $v$ to any of the leaf vertices of an unplayed subtree is $d(u,v) + h$, so the cops cannot distinguish the leaves of an unplayed subtree.
We may now proceed inductively, since the unplayed subtree is a copy of $\mathcal{T}_{m}^{h-1}$.
We repeat the argument over $h$ iterations, each taking $\floor{ (m-1)/k}$ rounds, yielding a capture time of at least $h\floor{ \frac{(m-1)}{k}}$.
\end{proof}

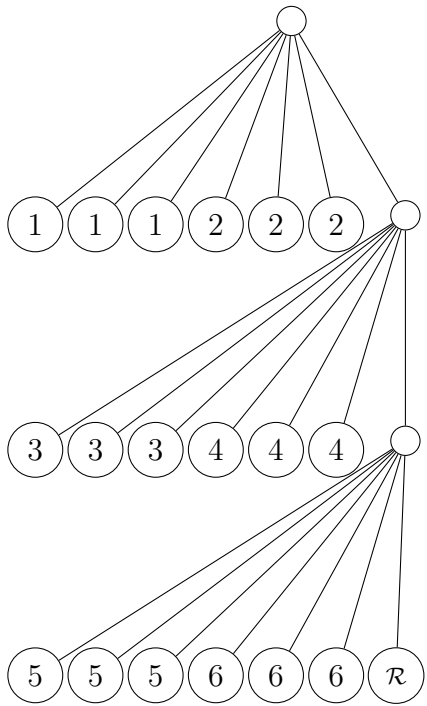
\begin{figure}[h]
\centering
\begin{tikzpicture}
\node [circle,draw] (1){};
\node [circle,draw] [below left = 2.3cm and 3cm of 1] (2){$1$};
\node [circle,draw] [below left = 2.3cm and 2.2cm of 1] (3){$1$};
\node [circle,draw] [below left = 2.3cm and 1.4cm of 1] (4){$1$};
\node [circle,draw] [below left = 2.3cm and 0.6cm of 1] (5){$2$};
\node [circle,draw] [below left = 2.3cm and -0.2cm of 1] (6){$2$};
\node [circle,draw] [below left = 2.3cm and -1cm of 1] (7){$2$};
\node [circle,draw] [below left = 2.3cm and -1.8cm of 1] (8){};
\node [circle,draw] [below left = 5.3cm and 3cm of 1] (2b){$3$};
\node [circle,draw] [below left = 5.3cm and 2.2cm of 1] (3b){$3$};
\node [circle,draw] [below left = 5.3cm and 1.4cm of 1] (4b){$3$};
\node [circle,draw] [below left = 5.3cm and 0.6cm of 1] (5b){$4$};
\node [circle,draw] [below left = 5.3cm and -0.2cm of 1] (6b){$4$};
\node [circle,draw] [below left = 5.3cm and -1cm of 1] (7b){$4$};
\node [circle,draw] [below left = 5.3cm and -1.8cm of 1] (8b){};
\node [circle,draw] [below left = 8.3cm and 3cm of 1] (2c){$5$};
\node [circle,draw] [below left = 8.3cm and 2.2cm of 1] (3c){$5$};
\node [circle,draw] [below left = 8.3cm and 1.4cm of 1] (4c){$5$};
\node [circle,draw] [below left = 8.3cm and 0.6cm of 1] (5c){$6$};
\node [circle,draw] [below left = 8.3cm and -0.2cm of 1] (6c){$6$};
\node [circle,draw] [below left = 8.3cm and -1cm of 1] (7c){$6$};
\node [circle,draw] [below left = 8.3cm and -1.8cm of 1] (8c){\fontsize{9.2}{1}\selectfont $\mathcal{R}$};
\path[draw] (1) -- (2);
\path[draw] (1) -- (3);
\path[draw] (1) -- (4);
\path[draw] (1) -- (5);
\path[draw] (1) -- (6);
\path[draw] (1) -- (7);
\path[draw] (1) -- (8);
\path[draw] (8) -- (2b);
\path[draw] (8) -- (3b);
\path[draw] (8) -- (4b);
\path[draw] (8) -- (5b);
\path[draw] (8) -- (6b);
\path[draw] (8) -- (7b);
\path[draw] (8) -- (8b);
\path[draw] (8b) -- (2c);
\path[draw] (8b) -- (3c);
\path[draw] (8b) -- (4c);
\path[draw] (8b) -- (5c);
\path[draw] (8b) -- (6c);
\path[draw] (8b) -- (7c);
\path[draw] (8b) -- (8c);

\end{tikzpicture}
\caption{An example of three cops playing on $\mathcal{T}_{7}^{3}$ as in the proof of Lemma~\ref{lem:large_m_lower_bound}. Each vertex containing a number $t$ represents a subtree that the cops visited in round $t$.
}
\label{fig:large_m}
\end{figure}

See Figure~\ref{fig:large_m} for an example of Lemma~\ref{lem:large_m_lower_bound}. We derive an upper bound that differs from the lower bound by an additive factor of at most $h$.
\begin{lemma}  \label{lem:large_m_upper_bound}
For an integer $k \ge 2$ we have that
$$\capt_{\zeta,k}(\T_m^h ) \le h \ceil{  \frac{m-1}{k} } .$$
\end{lemma}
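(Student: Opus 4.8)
The plan is to give a recursive cop strategy that, in each of $h$ \emph{phases} lasting at most $\ceil{(m-1)/k}$ rounds, descends one level of the tree while keeping the robber confined to an ever-smaller perfect $m$-ary tree. By Lemma~\ref{lem:cop2leaf} I may assume the cops always probe leaves of $\T_m^h$. I maintain the invariant that at the start of phase $i$ the robber is known to occupy a subtree $S$ isomorphic to $\T_m^{h-i+1}$, rooted at a vertex $r$ at depth $\delta = i-1$ from the global root $u$; initially $S=\T_m^h$, $r=u$, and $\delta=0$. A phase determines which of the $m$ child-subtrees $S_1,\dots,S_m$ of $r$ contains the robber, so that afterward the robber is confined to a copy of $\T_m^{h-i}$, ready for phase $i+1$. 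After $h$ phases the confining subtree is a single vertex, which is the robber's location, and the total is $h\ceil{(m-1)/k}$ rounds.

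The core is a single-round test. To \emph{probe} a child-subtree $S_j$, I place a cop on any leaf $\ell$ of $\T_m^h$ lying in $S_j$; with $k$ cops this tests $k$ subtrees per round. The key computation is that every probing cop whose subtree does \emph{not} contain the robber reports the same distance: if $R\in S_i$ and $\ell\in S_j$ with $j\ne i$, the $\ell$–$R$ path meets at $r$, giving $d(\ell,R)=(h-\delta)+(\mathrm{depth}(R)-\delta)$, a value independent of $j$. By contrast, a cop in the robber's own subtree $S_i$ meets $R$ at a vertex of depth at least $\delta+1$, so it reports a strictly smaller distance. Hence the multiset of $k$ readings has a well-defined background value, namely its maximum, and the robber lies in the subtree of any cop reading below it; if all readings agree, the robber is in none of the probed subtrees. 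Probing $m-1$ of the $m$ subtrees across $\ceil{(m-1)/k}$ rounds therefore either locates the robber's subtree directly or, by elimination, pins it to the single unprobed subtree.

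The main obstacle — the reason the confinement invariant is not automatic — is that the robber may move during the $\ceil{(m-1)/k}$ rounds of a phase, in principle slipping between child-subtrees of $r$ or escaping $S$ upward. I would resolve this with the observation that the robber cannot cross $r$ (nor any ancestor of $r$) undetected: the same distance computation shows that if the robber occupies $r$ itself, then every probing cop, being a descendant of $r$, reports exactly $h-\delta$, and among the positions to which the robber is currently confined, $r$ is the \emph{unique} vertex at depth $\delta$. Since $r$ is the cut vertex separating $S$ from the rest of the tree, any move between sibling subtrees or out of $S$ forces the robber onto $r$ and hence into immediate capture. Consequently the robber is effectively frozen into one child-subtree for the duration of a phase, the test above correctly identifies it, and the invariant is preserved. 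Summing $\ceil{(m-1)/k}$ rounds over the $h$ phases yields $\capt_{\zeta,k}(\T_m^h)\le h\ceil{(m-1)/k}$.
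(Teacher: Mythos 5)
Your proof is correct and follows essentially the same route as the paper's: in both, each phase probes the child subtrees of the current root in batches of $k$, identifies the robber's subtree as the one whose cop reports a distance strictly smaller than the common background value, observes that the robber cannot occupy the current root at a probe time without being uniquely identified (so it can neither switch subtrees nor escape upward), and then recurses, giving $h\ceil{\frac{m-1}{k}}$ rounds in total. The only difference is cosmetic: the paper places the cops directly on the children of the root, so that the robber's subtree is flagged by a reading exactly $2$ smaller than all the others, whereas you place them on leaves via Lemma~\ref{lem:cop2leaf}, which changes the distance arithmetic but not the structure of the argument.
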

\begin{proof}
We modify the proof of Lemma~\ref{lem:large_m_lower_bound} to instead show an upper bound. Let $T_1, \ldots, T_{m}$ denote the $m$ rooted subtrees formed by deleting the root vertex $u$, with the root $u_i$ of each such subtree $T_i$ being the vertex that was connected to $u$.

In round $t$ with $1 \le t \le \ceil{ (m-1)/k },$ the cops play on $u_{k(t-1)+1}, u_{k(t-1)+2}, \ldots, u_{kt} $. Note that if the robber ever plays on the root $u$ they are captured immediately, as the root $u$ is the unique vertex at distance $1$ from all the cops (this requires at least two cops). Thus, the robber must remain on the same $T_i$ for all these rounds. We may assume without loss of generality that if $i = k(t-1) + j$ for some $1 \le t \le \ceil{ (m-1)/k }$ and $1 \le j \le k$, then in round $t$ the cops can determine that the robber was in $T_i$, and that the cops could not know this before round $t$. This follows since $d(u_{j'}, R) =d(u_{j''}, R)$ for all $j', j'' \ne i$ and  $d(u_i, R) =d(u_{j'}, R) – 2$ for all $j' \ne i$, where $R$ is the vertex occupied by the robber that lies on the subtree rooted at $u_i$. Hence, $u_i$ will be the unique vertex with a shorter distance to the robber than the others. If $i$ is not of this form, then $i = m$ and after $t$ rounds we deduce the robber is in $T_m$ as it was not in any $T_i$.

After at most $\ceil{ (m-1)/k }$ rounds the cops know which subtree the robber is on and the robber can never move to the root of the tree without being captured. Now we move to playing on $T_i$ and apply induction. Since the robber can never move to the root $u_i$, the robber can never escape the subtree $T_i$. This induction lasts for $h$ rounds.
\end{proof}

We therefore have the following.
\begin{corollary}
For all integers $k\ge 2,$ we have that
\[
h \floor{  \frac{m-1}{k} } \le \capt_{\zeta,k}(\mathcal{T}_m^h) \le h \ceil{  \frac{m-1}{k} }.
\]
\end{corollary}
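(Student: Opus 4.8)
The plan is to obtain this corollary as an immediate consequence of the two preceding lemmas, with no additional argument required beyond citing them. The displayed inequality is a conjunction of a lower bound and an upper bound, and each of these has already been established separately: the lower bound $h\floor{(m-1)/k}$ is exactly the content of Lemma~\ref{lem:large_m_lower_bound}, and the upper bound $h\ceil{(m-1)/k}$ is exactly the content of Lemma~\ref{lem:large_m_upper_bound}. Both lemmas are stated for all integers $k \ge 2$, which matches the hypothesis of the corollary, so I would simply remark that combining the two gives the sandwiched estimate
\[
h \floor{\frac{m-1}{k}} \le \capt_{\zeta,k}(\T_m^h) \le h \ceil{\frac{m-1}{k}}.
\]

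Conceptually, it is worth noting \emph{why} the two bounds nearly match, even though I would not re-derive this. Both lemmas use the same core idea: the robber commits to a leaf, and in each ``phase'' the cops probe the $m$ subtrees hanging off the current root, $k$ subtrees per round. Since the robber's subtree cannot be localized until at most one ambiguous subtree remains, each phase costs the cops between $\floor{(m-1)/k}$ and $\ceil{(m-1)/k}$ rounds; the recursion descends through the $h$ levels of the tree, giving the factor of $h$. The only discrepancy between the lower and upper bounds comes from the floor versus ceiling in a single phase, which accumulates over the $h$ levels to an additive gap of at most $h$. This is the additive factor of $h$ alluded to in the remark preceding Lemma~\ref{lem:large_m_upper_bound}.

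Since the corollary is purely a bookkeeping combination of two already-proven statements, I do not anticipate any genuine obstacle. The one point requiring minor care is simply to confirm that the quantifier and hypotheses on $k$ agree across both lemmas and the corollary, namely that all three are stated for integers $k \ge 2$, so that neither bound is being applied outside its established range. Beyond that verification, the proof is a single sentence invoking Lemma~\ref{lem:large_m_lower_bound} and Lemma~\ref{lem:large_m_upper_bound}.
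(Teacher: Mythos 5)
Your proposal is correct and matches the paper exactly: the paper states this corollary with no proof at all (``We therefore have the following''), treating it as the immediate conjunction of Lemma~\ref{lem:large_m_lower_bound} and Lemma~\ref{lem:large_m_upper_bound}, which is precisely your argument. Your hypothesis check that both lemmas hold for all integers $k \ge 2$ is the only verification needed, and it goes through.
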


We note that this corollary is most useful when the number of cops $k$ is less than $m$. If $k \ge m$, then these bounds only show $0 \le \capt_{\zeta,k}(\mathcal{T}_{m}^{h}) \le h$ and we can find much improved bounds.

We next present a result that is effective in the case when the number of cops is greater than or equal $m$.
\begin{lemma}\label{lem:small_m_lower_bound}
For an integer $k\ge 2$, we have that
$$\capt_{\zeta,k}( \T_m^h) \ge \frac{h}{1 + \floor{\log_m{k}}} .$$
\end{lemma}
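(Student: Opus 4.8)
The plan is to adapt the adversary argument of Lemma~\ref{lem:large_m_lower_bound} to the regime $k \ge m$, where the extra cops can probe inside several top-level subtrees at once but still cannot reach inside all of the subtrees a few levels further down. Set $\ell = \floor{\log_m k}$, so that $m^{\ell} \le k < m^{\ell+1}$. As in Lemma~\ref{lem:large_m_lower_bound}, I let the robber commit to sitting on a single leaf for the entire game and assume the cops know this; since this restriction only helps the cops, any lower bound obtained under it is valid (and the capture time is well defined, as $\zeta(\T_m^h)\le 2\le k$). Under this restriction the robber's consistent set (the set of leaves compatible with all probes so far) can only shrink, and I will track a nested family of full subtrees contained in it.

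The key structural step is the following invariant. Suppose that after round $t$ the consistent set contains every leaf of some subtree $S_t \cong \T_m^{h_t}$. In round $t+1$ the $k$ cops probe $k$ vertices. Consider the $m^{\ell+1}$ vertex-disjoint sub-subtrees of $S_t$ rooted at its vertices of depth $\ell+1$, each a copy of $\T_m^{h_t-\ell-1}$. Since each cop lies in at most one of these and $k < m^{\ell+1}$, at least one sub-subtree $S_{t+1}$, rooted at some vertex $r'$, contains no cop. Because $r'$ separates $V(S_{t+1})$ from the rest of the tree, every cop at a vertex $v \notin V(S_{t+1})$ satisfies $d(v,x) = d(v,r') + (h_t-\ell-1)$ for all leaves $x$ of $S_{t+1}$; hence all cops return identical distances to all leaves of $S_{t+1}$ in round $t+1$. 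As $S_{t+1} \subseteq S_s$ for every $s \le t+1$ and each $S_s$ was likewise cop-free in round $s$, the leaves of $S_{t+1}$ also produce identical distance vectors in every earlier round. Consequently the leaves of $S_{t+1}$ are pairwise indistinguishable over the entire history and all remain in the consistent set, proving the invariant with $h_{t+1} = h_t - (\ell+1)$.

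Starting from $S_0 = \T_m^h$ of height $h$ and applying this step whenever the current subtree has height at least $\ell+1$, I obtain subtrees of heights $h, h-(\ell+1), h-2(\ell+1), \dots$. A short computation shows that for every $t \le \ceil{h/(\ell+1)} - 1$ the subtree $S_t$ still has height at least $1$, hence at least $m \ge 2$ leaves, so the robber is not yet located after round $t$. Therefore $\capt_{\zeta,k}(\T_m^h) \ge \ceil{h/(\ell+1)} \ge h/(1+\floor{\log_m k})$, as claimed.

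The part needing the most care is verifying that the leaves of the final surviving subtree are simultaneously consistent with the \emph{entire} sequence of distance vectors, not merely the most recent round; this is exactly what the nesting $S_0 \supseteq S_1 \supseteq \cdots$ together with the per-round cop-freeness provides, and it is the main obstacle to a naive round-by-round argument. The remaining ingredients — the pigeonhole count $k < m^{\ell+1}$ and the arithmetic converting the height recursion into the stated bound — are routine.
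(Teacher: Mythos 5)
Your proof is correct and takes essentially the same approach as the paper's: both arguments restrict the robber to a fixed leaf, use the pigeonhole bound $k < m^{\ell+1}$ (the paper writes $i = 1+\floor{\log_m k}$ for your $\ell+1$) to find a cop-free subtree rooted at depth $\ell+1$ inside the current subtree, and iterate this to obtain a nested family of subtrees whose leaves the cops cannot distinguish, yielding the height recursion $h \to h-(\ell+1)$ and the stated bound. Your explicit check that the leaves of the final subtree are consistent with the \emph{entire} probe history (via nesting and per-round cop-freeness) is handled more implicitly in the paper, but it is the same underlying argument.
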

\begin{proof}
Since we are only proving a lower bound, we may  assume that the robber picks a leaf and remains on that leaf throughout the game and the cops know that the robber is employing this strategy; the robber has other options but if the cops cannot capture the robber in fewer rounds in this case, they will not be able to do so when the robber has more freedom.

Let $i = 1 + \floor{\log_m{k}}$ so that $m^{i-1} \le k < m^i$.
We claim that in round $t$ there is some subtree $T$ isomorphic to $\T_m^{h - ti}$ such that the set of vertices where the robber could be is exactly the leaves of $T$. In round $0$, the initial robber placement could be on any leaf of $\T_m^h$ so clearly this holds in round $0$. We proceed by induction on $t$.

Suppose that the claim holds in round $t$ for the subtree $T$ isomorphic to $\T_m^{h - ti}$. Consider the induced subgraph on all vertices of $T$ at distance $i$ or more from the root of $T$.
We know this consists of $m^i$ copies of  $\T_m^{h - ti - i}$. The cops pick $k < m^i$ vertices, so there is one of these subtrees isomorphic to  $\T_m^{h - ti - i}$ with no cops on it. Call this $T'$ and suppose that the robber chose a leaf of this subtree; this is possible since the robber knew in advance which vertices the cops would pick. For any vertex $v$ outside $T',$ the distance from $v$ to a leaf of $T'$ is the same for every leaf of $T'$, so the cops cannot distinguish between the leaves of $T'$. In particular, the set of vertices the robber could be on contains all leaves of $T'$, and the claim holds in round $t+1$.

Now, it is evident that the cops cannot capture the robber in $t$ rounds for $t < h/i$. Hence, $h-it > 0$, and the tree $\T_m^{h-it}$ has at least $ m$ leaves, and so there are at least $ m > 1$ vertices that the robber could be on in round $t$.
\end{proof}

\begin{figure}[h]
\centering
\begin{tikzpicture}
\node [circle,draw] (1){};
\node [circle,draw] [below left = 1.5cm and 1.5cm of 1] (2){};
\node [circle,draw] [below right = 1.5cm and 1.5cm of 1] (3){};
\node [circle,draw] [below left = 1.5cm and 0.7cm of 2] (4){$1$};
\node [circle,draw] [below right = 1.5cm and 0.5cm of 2] (5){$1$};
\node [circle,draw] [below left = 1.5cm and 0.5cm of 3] (6){$1$};
\node [circle,draw] [below right = 1.5cm and 0.7cm of 3] (7){$*$};
\node [circle,draw] [below left = 1.5cm and 0.2cm of 4] (8){};
\node [circle,draw] [below right = 1.5cm and 0.2cm of 4] (9){};
\node [circle,draw] [below left = 1.5cm and 0.2cm of 5] (10){};
\node [circle,draw] [below right = 1.5cm and 0.2cm of 5] (11){};
\node [circle,draw] [below left = 1.5cm and 0.2cm of 6] (12){};
\node [circle,draw] [below right = 1.5cm and 0.2cm of 6] (13){};
\node [circle,draw] [below left = 1.5cm and 0.2cm of 7] (14){\tiny $*$};
\node [circle,draw] [below right = 1.5cm and 0.2cm of 7] (15){\tiny $*$};
\path[draw] (1) -- (2);
\path[draw] (1) -- (3);
\path[draw] (2) -- (4);
\path[draw] (2) -- (5);
\path[draw] (3) -- (6);
\path[draw] (3) -- (7);
\path[draw] (4) -- (8);
\path[draw] (4) -- (9);
\path[draw] (5) -- (10);
\path[draw] (5) -- (11);
\path[draw] (6) -- (12);
\path[draw] (6) -- (13);
\path[draw] (7) -- (14);
\path[draw] (7) -- (15);
\end{tikzpicture}
\caption{An example for the proof of Lemma~\ref{lem:small_m_lower_bound}, where $3$ cops are playing on $\mathcal{T}_{2}^{h}$, which forces the robber to be on the subtree indicated by stars.}
\label{fig:small_m}
\end{figure}
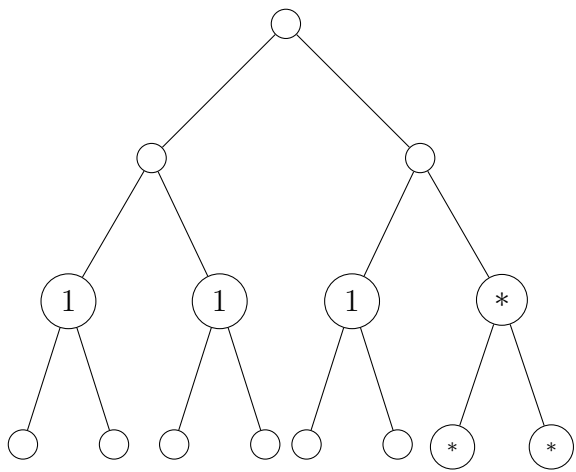

We can also find an analogous upper bound.

\begin{lemma}\label{lem:small_m_upper_bound}
If $m \le k < m^{h+1}$, then we have that
$$\capt_{\zeta,k}( \T_m^h ) \le \frac{h}{\floor{\log_m{k}}} .$$
\end{lemma}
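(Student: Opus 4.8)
The plan is to establish a cop-side counterpart of the descent argument in Lemma~\ref{lem:small_m_lower_bound}. Write $j = \floor{\log_m k}$, so that $m^j \le k < m^{j+1}$; the hypothesis $m \le k < m^{h+1}$ guarantees $1 \le j \le h$. Since $k \ge m^j$, the cops can occupy every vertex of any prescribed cut consisting of all vertices at depth $j$ below a chosen root. The strategy I would describe confines the robber, round by round, to an ever-deeper perfect subtree, descending exactly $j$ levels each round; the lower bound already shows we cannot hope for $j+1$ levels, since $k < m^{j+1}$. The terminal rounds are handled by Lemma~\ref{lem:all_leaves_win}: once the robber is confined to a subtree $\T_m^{h'}$ with $h' \le j$, that subtree has at most $m^{h'} \le m^j \le k$ leaves, so placing cops on all of them captures in a single round.

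The heart of the argument is a single-round reduction. Suppose the cops know the robber lies in a subtree $T' \cong \T_m^{h'}$ rooted at $\rho$, with $h' \ge j$, and they place one cop on each of the $m^j$ vertices $W'$ at depth $j$ in $T'$. For a robber at a vertex $R$ of depth $d$ in $T'$, a short tree-distance computation splits into three cases. If $d=j$, some cop reads distance $0$ and the robber is captured. If $d>j$, the robber lies strictly below a unique $w' \in W'$, and $w'$ is the strictly closest cop: its distance is $d-j$, whereas every other cop is at distance at least $d-j+2$, so the cops learn the unique depth-$j$ subtree $S' \cong \T_m^{h'-j}$ containing $R$. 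If $d<j$, the set of closest cops is exactly the $m^{j-d} \ge m$ depth-$j$ descendants of $R$, all at distance $j-d$, and from this set together with the common distance the cops recover $R$ \emph{exactly}. Crucially, in the case $d>j$ the robber sits at depth at least $j+1$, so after its single move it cannot climb above $w'$ and remains confined to $S'$; hence the confinement height drops by $j$.

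Iterating this reduction against a robber that always keeps to depth at least $j+1$ peels off $j$ levels each round: if the robber is confined to a copy of $\T_m^{H}$ at the start of a round with $H>j$, then it is confined to a copy of $\T_m^{H-j}$ at the start of the next. Starting from $H=h$, the confining height first drops to at most $j$ at the start of round $\ceil{h/j}$, where the terminal all-leaves play of Lemma~\ref{lem:all_leaves_win} finishes the game. This gives $\capt_{\zeta,k}(\T_m^h) \le \ceil{h/j}$, which matches the stated bound $h/j$ exactly when $j \mid h$.

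The main obstacle is the top-part case $d<j$. There the robber is pinned to its exact vertex rather than merely confined to a cleaner subtree, and from a known position it may step \emph{upward}, out of the current subtree. I would argue that this never helps the robber: once its vertex is known, the cops track it within a known neighborhood of at most $m+1$ vertices and re-pin it with a fresh cut, so an excursion toward the top only shortens the game relative to the deep-staying robber and keeps the total at most $\ceil{h/j}$. Making this precise—verifying the distance computations in each case, confirming that an exactly located robber is recaptured within the remaining round budget, and reconciling the $\ceil{h/j}$ count with the stated $h/j$—is the part that needs the most care; the descent itself is a direct mirror of the lower bound.
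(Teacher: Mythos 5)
Your strategy is essentially the paper's proof: the paper also sets $i=\floor{\log_m k}$, reduces to $m^i$ cops, plays them on all $m^i$ vertices at depth $i$ below the current root, splits on the robber's depth $d$ relative to the cut (exact location via the common ancestor of the closest cops when $d\le i$; confinement to a unique subtree $\T_m^{h-i}$ when $d>i$), and recurses, with Lemma~\ref{lem:all_leaves_win} as the base case. The one thing to correct is your flagged ``main obstacle'': it is vacuous. In this game, capture \emph{is} determining the vertex the robber occupies; the moment the cops deduce the robber's exact position (your case $d<j$), the game ends, so there is no scenario in which a located robber ``steps upward'' and must be re-pinned with a fresh cut. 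The paper disposes of that case in one sentence for exactly this reason, and your proposed tracking argument (which, if it were needed, would threaten your round count) can simply be deleted. As for reconciling $\ceil{h/j}$ with $h/j$: your accounting is in fact the more careful one; the paper's induction asserts a further $(h-i)/i$ rounds without addressing divisibility, so its stated bound is exactly what your count gives when $j \mid h$, and your $\ceil{h/j}$ is the honest guarantee of the common strategy in general.
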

\begin{proof}
Let $i = \floor{\log_m{k}}$ and note that $1 \le i \le h$. It is enough to  show that the theorem holds with $k = m^i$ cops owing to temporal speed-up.

We will prove the lemma by induction on $h$. When $h \le i$, by Lemma~\ref{lem:all_leaves_win} the cops can win in one round by playing on all of the leaves of $\T_m^h$, and so we are done.

Suppose $h > i$. In the first round, the cops play on the $m^i$ vertices at distance $i$ from the root. Call this set $C$. If the robber was on a vertex $v$ at distance $j$ from the root, then the maximum distance probed from $C$ is exactly $i + j$, so the cops can determine $j$.

If $j \le i$, then the  $2^{i-j}$ vertices in $C$ that are descendants of $v$ are at distance $i-j$ from $v$ and all other leaves are at distance $i+j$ from $v$. Thus, given the set of distances, we can determine all leaves that are descendants of the vertex the robber is on and taking their unique common ancestor gives $v$.

Otherwise, $j > i$ and there is a unique vertex $u$ in $C$ with the minimum distance $j-i$ to $v$. The cops know that the robber is on the subtree rooted at $u$ so on future rounds play passes to this subtree. This subtree is isomorphic to $\T_m^{h-i}$ and so by induction the cops can find the robber in a further $(h-i)/i = h/i - 1$ rounds. Thus, the cops can capture the robber in $h/i$ rounds.
\end{proof}

We therefore have the following.
\begin{corollary} \label{cor:large_k}
For $k \geq m,$
\[
\frac{h}{ 1+ \floor{\log_m(k)}} \leq \capt_{\zeta,k}(\mathcal{T}_m^h) \leq  \frac{h}{\floor{ \log_m(k)}}
\]
\end{corollary}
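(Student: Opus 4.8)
The plan is to read Corollary~\ref{cor:large_k} as nothing more than the conjunction of the two immediately preceding lemmas, so that no new combinatorial content needs to be produced---only the hypotheses need checking. First I would dispatch the lower bound: since the perfect $m$-ary tree is only of interest for $m \ge 2$, the standing assumption $k \ge m$ already forces $k \ge 2$, so Lemma~\ref{lem:small_m_lower_bound} applies verbatim and yields $\capt_{\zeta,k}(\T_m^h) \ge h/(1 + \floor{\log_m k})$, which is the left-hand inequality.

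For the upper bound I would invoke Lemma~\ref{lem:small_m_upper_bound}, which carries the hypothesis $m \le k < m^{h+1}$. Together with the corollary's standing assumption $k \ge m$, this confines attention to the interval $m \le k < m^{h+1}$, on which the lemma gives $\capt_{\zeta,k}(\T_m^h) \le h/\floor{\log_m k}$. Chaining the two inequalities then produces the displayed double bound directly.

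The single point that genuinely requires care---and the only place the argument is not purely mechanical---is the range of validity of the upper bound, which I expect to be the main (if mild) obstacle. If $k \ge m^{h+1}$, then $\floor{\log_m k} \ge h+1$, so $h/\floor{\log_m k} \le h/(h+1) < 1$, whereas $\capt_{\zeta,k}(\T_m^h) \ge 1$ for $h \ge 1$; the stated upper bound therefore cannot hold in that regime. Accordingly the corollary should be understood on the interval $m \le k < m^{h+1}$, which is precisely the domain of Lemma~\ref{lem:small_m_upper_bound}. Beyond this interval the bounds are uninteresting in any case, since $\T_m^h$ has $m^h < m^{h+1}$ leaves and hence $\capt_{\zeta,k}(\T_m^h) = 1$ once $k \ge m^h$, by Lemma~\ref{lem:all_leaves_win} together with temporal speed-up. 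No further obstacle arises, as all of the substantive work was already carried out in Lemmas~\ref{lem:small_m_lower_bound} and~\ref{lem:small_m_upper_bound}.
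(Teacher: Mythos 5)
Your proposal is correct and matches the paper exactly: the paper gives no separate proof of Corollary~\ref{cor:large_k}, presenting it as the immediate conjunction of Lemmas~\ref{lem:small_m_lower_bound} and~\ref{lem:small_m_upper_bound}, which is precisely your argument. Your further observation that the upper bound is only valid for $m \le k < m^{h+1}$ (since for $k \ge m^{h+1}$ one has $h/\floor{\log_m k} < 1 \le \capt_{\zeta,k}(\T_m^h)$) is a legitimate, if minor, correction to the paper, whose statement of the corollary carries only the hypothesis $k \ge m$ and thus silently inherits the restriction from Lemma~\ref{lem:small_m_upper_bound}.
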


The upper and lower bounds differ by a multiplicative factor of $ 1 + \frac{1}{\floor{\log_m{k}}}$, so if $k$ is large compared to $m$ this difference is relatively small.

Every tree $T$ of maximum degree $\Delta(T)$ and radius $h=\mathrm{rad}(T)$ is a subtree of $\mathcal{T}_{\Delta}^h$. By Corollary~\ref{tcor} and Lemmas~\ref{lem:small_m_upper_bound} and \ref{lem:large_m_upper_bound}, we derive the following.

\begin{theorem}\label{ttree}
For a tree $T$, we have that:
\begin{equation*}
\capt_{\zeta,k}(T) \leq \
\begin{cases} \mathrm{rad}(T) \ceil{ \frac{\Delta(T)-1}{k} } &\text{if } 2 \le k < \Delta(T), \\
\frac{\mathrm{rad}(T)}{\floor{ \log_{\Delta(T)}(k) }} &\text{if } k \ge \Delta(T).\\
\end{cases}
\end{equation*}
\end{theorem}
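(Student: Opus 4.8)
The plan is to realize $T$ as an induced subtree of a perfect $\Delta(T)$-ary tree of the appropriate height and then to invoke the monotonicity of capture time together with the two upper bounds already established for perfect trees. Concretely, write $\Delta = \Delta(T)$ and $h = \mathrm{rad}(T)$ (one may assume $\Delta \ge 2$, as otherwise $T$ is $K_1$ or $K_2$ and the claim is trivial). First I would exhibit $T$ as a connected induced subtree of $\T_{\Delta}^{h}$; then Corollary~\ref{tcor} yields $\capt_{\zeta,k}(T) \le \capt_{\zeta,k}(\T_{\Delta}^{h})$ for every $k \ge \zeta(\T_{\Delta}^{h})$, and since a tree has localization number at most $2 \le k$ in both cases of the statement, this inequality holds throughout. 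Finally I would read off the two cases directly from Lemma~\ref{lem:large_m_upper_bound} and Lemma~\ref{lem:small_m_upper_bound} applied with $m = \Delta$.

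For the embedding, fix a center $c$ of $T$, so that the eccentricity of $c$ equals $h$, and root $T$ at $c$. Every vertex of $T$ then lies within distance $h$ of $c$, so the rooted tree has height at most $h$. The root $c$ has at most $\Delta$ children and every other vertex has at most $\Delta-1$ children, whereas in $\T_{\Delta}^{h}$ the root has exactly $\Delta$ children and every internal vertex has exactly $\Delta$ children. Hence I would map $c$ to the root of $\T_{\Delta}^{h}$ and extend level by level, injectively sending the children of each vertex into the (at least as many) available child-slots; since the height is at most $h$, the image stays inside $\T_{\Delta}^{h}$. This exhibits $T$ as a subtree of $\T_{\Delta}^{h}$, which is a connected induced subgraph and hence special, so Corollary~\ref{tcor} indeed applies.

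With the embedding in hand, the two cases are immediate. When $2 \le k < \Delta$, Lemma~\ref{lem:large_m_upper_bound} with $m = \Delta$ gives $\capt_{\zeta,k}(\T_{\Delta}^{h}) \le h \ceil{(\Delta-1)/k}$, which is the first line; this lemma carries no upper restriction on $k$, so no difficulty arises here. When $k \ge \Delta$, Lemma~\ref{lem:small_m_upper_bound} with $m = \Delta$ gives $\capt_{\zeta,k}(\T_{\Delta}^{h}) \le h/\floor{\log_{\Delta} k}$, the second line. The only real obstacle is matching the range of $k$ to the hypotheses: Lemma~\ref{lem:small_m_upper_bound} requires $\Delta \le k < \Delta^{h+1}$, which is exactly the range in which the stated formula is meaningful, since for $k \ge \Delta^{h+1}$ one has $\floor{\log_{\Delta} k} \ge h+1$ and the bound would drop below $1$, while any nontrivial tree needs at least one round. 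I would handle this residual regime separately by noting that $T$ has at most $\Delta^{h}$ leaves (the maximum number of leaves of a height-$h$ tree with branching at most $\Delta$), so for $k \ge \Delta^{h}$ the cops play on all leaves of $T$ and capture in one round by Lemma~\ref{lem:all_leaves_win}; recording this ensures the theorem is read within its intended range.
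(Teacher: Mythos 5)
Your proposal is correct and follows essentially the same route as the paper, whose entire proof consists of the observation that $T$ is a subtree of $\T_{\Delta(T)}^{\mathrm{rad}(T)}$ followed by an appeal to Corollary~\ref{tcor} together with Lemmas~\ref{lem:large_m_upper_bound} and \ref{lem:small_m_upper_bound}; your level-by-level embedding argument simply makes explicit what the paper asserts in one sentence. Your extra care with the regime $k \ge \Delta(T)^{\mathrm{rad}(T)+1}$, where the stated bound drops below $1$ and Lemma~\ref{lem:small_m_upper_bound} no longer applies, addresses a genuine edge case that the paper's proof silently ignores.
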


As for a lower bound, we know that a tree of maximum degree $\Delta(T)$ contains a star $K_{1,\Delta(T)}$, so by the monotone property of capture time on trees we have the following.
\begin{theorem}\label{ttree1}
For a tree $T$, if $k < \Delta(T),$ then we have that
\begin{equation*}
\capt_{\zeta,k}(T) \ge
 \ceil{ \frac{\Delta(T)-1}{k} }.
\end{equation*}
\end{theorem}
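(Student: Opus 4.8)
The plan is to reduce to a star and then lower-bound the capture time of the star directly. First I would pick a vertex $v$ of $T$ with $\deg(v)=\Delta(T)$. Since $T$ is a tree it contains no triangles, so the $\Delta(T)$ neighbours of $v$ are pairwise nonadjacent; hence the subgraph induced by $v$ together with its neighbours is precisely $K_{1,\Delta(T)}$, and being connected it is a subtree of $T$. By Corollary~\ref{tcor} the capture time is monotone on trees, so for every $k\ge\zeta(T)$ we obtain $\capt_{\zeta,k}(K_{1,\Delta(T)})\le\capt_{\zeta,k}(T)$. (Recall that $\capt_{\zeta,k}(T)$ is only defined for $k\ge\zeta(T)$, so this covers the whole range of the statement.) It therefore suffices to show $\capt_{\zeta,k}(K_{1,\Delta})\ge\ceil{\frac{\Delta-1}{k}}$, where $\Delta=\Delta(T)$.

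For the star bound I would run an adversary argument for the robber. Let the robber pick a leaf at the outset and remain there for the whole game, and let the cops know this strategy is in force; this restriction on the robber only helps them. On a star the probes carry almost no information: a cop probing the centre always reports distance $1$ and rules out no leaf, while a cop probing a leaf $\ell$ reports $0$ exactly when the robber sits on $\ell$ (an immediate capture) and otherwise reports $2$, a value shared by every leaf other than $\ell$. Hence a probe either captures the robber or eliminates a single leaf, and in one round the $k$ cops probe at most $k$ distinct leaves. After $t$ rounds the cops have probed at most $tk$ leaves, and every leaf they have not probed is a robber location consistent with all distance vectors received so far.

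It remains to turn this into the stated bound. Suppose $t<\ceil{\frac{\Delta-1}{k}}$. Since $\ceil{x}-1<x$, we get $tk<\Delta-1$, so $tk\le\Delta-2$ and at least two leaves remain unprobed after round $t$; the cops cannot distinguish these two leaves, so the robber is not yet located. Therefore $\capt_{\zeta,k}(K_{1,\Delta})\ge\ceil{\frac{\Delta-1}{k}}$, which combined with the monotonicity above proves the theorem. I expect the crux to be precisely this star estimate: one must check that the symmetric distance vectors give the cops no extra adaptive leverage, so that the per-round elimination is genuinely capped at $k$. In particular, simply specialising Lemma~\ref{lem:large_m_lower_bound} to $m=\Delta$ and $h=1$ will not suffice, since it delivers only the floor $\floor{\frac{\Delta-1}{k}}$, whereas the ceiling is the sharp value for the star — for example $K_{1,4}$ with two cops needs two rounds rather than one.
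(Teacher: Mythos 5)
Your proposal is correct and follows essentially the same route as the paper: the paper's proof is exactly the observation that $T$ contains $K_{1,\Delta(T)}$ as a subtree, so Corollary~\ref{tcor} (monotonicity on trees) reduces the bound to the star. Your explicit adversary argument for $\capt_{\zeta,k}(K_{1,\Delta})\ge\ceil{\frac{\Delta-1}{k}}$ (each round eliminates at most $k$ leaves, and two unprobed leaves are indistinguishable) is a correct filling-in of the star estimate that the paper leaves implicit, and your remark that Lemma~\ref{lem:large_m_lower_bound} with $h=1$ would only give the floor is an accurate reason for arguing the star directly.
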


\section{Incidence Graphs of Projective Planes}

A \emph{projective plane} consists of a set of points and lines satisfying the following axioms.

\begin{enumerate}
\item There is exactly one line incident with every pair of distinct points.

\item There is exactly one point incident with every pair of distinct lines.

\item There are four points such that no line is incident with more than two
of them.
\end{enumerate}

We only consider finite projective planes. It can be shown that projective planes have $q^2+q+1$ many points and lines for an integer $q$, each point is on $(q+1)$-many lines, and each line contains $(q+1)$-many points. We let $X$ be the set of points and $\mathcal{B}$ be the set of lines. The only orders where projective planes are known to exist are prime powers; indeed, this is a deep conjecture in finite geometry. For these items and further background on projective planes, see \cite{cam}.

For a given projective plane $P$, define  the bipartite graph with red vertices the points of $P$, and the blue vertices represent the lines. Vertices of different colors are adjacent if they are
incident. We call this the \emph{incidence graph of} $P$. See Figure~\ref{1fanogp} for the incidence graph of the Fano plane, the projective
plane of order $2$.
\begin{figure} [ht!]
\begin{center}
\epsfig{figure=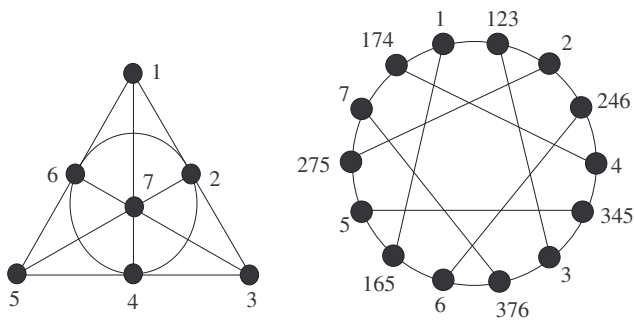,scale=0.7}\caption{The Fano plane and its incidence graph, the \emph{Heawood graph}. Lines are represented by triples.}\label{1fanogp}
\end{center}
\end{figure}

As was proved in \cite{BHM}, the incidence graph $G$ of a projective plane $P$ of order $q$ satisfies $\zeta(G)=q+1.$ We present an upper bound on the capture time of these graphs in the present section.

The following is a warm-up for larger values of $q.$
\begin{lemma}
The capture time of the Heawood graph is $2$.
\end{lemma}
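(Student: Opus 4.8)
The plan is to prove matching bounds, namely $\capt_{\zeta,3}(G)\ge 2$ and $\capt_{\zeta,3}(G)\le 2$, where $G$ is the Heawood graph with $\loc{G}=3$. Throughout I would exploit that $G$ is the point--line incidence graph of the Fano plane, so distances take only a few values: two distinct points, or two distinct lines, are at distance $2$, while a point and a line are at distance $1$ if incident and distance $3$ otherwise. These four distance values, and in particular the fact that a point-cop assigns points the values $\{0,2\}$ and lines the values $\{1,3\}$, drive the whole argument.

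For the lower bound I would argue that a single probe can never suffice, i.e.\ no set of three vertices resolves $V(G)$. Since the cops have no information before the first round, their first probe is a fixed triple $S$, and they win in one round precisely when $S$ is a resolving set; otherwise the omniscient robber sits on a vertex sharing its distance vector with another, and survives. It is then enough to check, splitting by the number of point-cops versus line-cops in $S$ (the four cases $3{+}0,\,2{+}1,\,1{+}2,\,0{+}3$, the last two dual to the first two by self-duality of the plane), that two vertices always collide. When $S$ is three points, the four remaining points all probe $(2,2,2)$; when $S$ is two points and a line $\ell$, at least $4-2=2$ points lie off $\ell$ and all probe $(2,2,3)$. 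This yields $\capt_{\zeta,3}(G)\ge 2$; equivalently, the metric dimension of $G$ exceeds $3$, consistent with $\loc{G}\le\beta(G)$.

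For the upper bound I would exhibit an explicit two-round strategy. Fix a non-collinear triple of points $\{1,2,3\}$ and probe it in round $1$. The key point is that, since $\{1,2,3\}$ lies on no common line, the seven Fano lines realize exactly the seven proper subsets of $\{1,2,3\}$, so every line receives a distinct vector in $\{1,3\}^3$ and a robber on any line is located at once; a robber on $1,2,$ or $3$ is likewise caught. The only surviving case is a robber on one of $\{4,5,6,7\}$, all of which probe $(2,2,2)$. After the robber's move the candidate set $W$ consists of these four points together with every line incident to them, which is all seven lines. In round $2$ I would probe the complementary triple $\{5,6,7\}$: points probe vectors in $\{0,2\}^3$ and lines in $\{1,3\}^3$, so the two sides cannot collide; the four points are separated because $5,6,7$ coincide with distinct cops while $4$ lies off all three; and the seven lines are again separated because they realize the seven proper subsets of $\{5,6,7\}$. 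Hence every vertex of $W$ gets a distinct second-round vector, the robber is located, and $\capt_{\zeta,3}(G)\le 2$.

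The main obstacle is the upper bound, specifically controlling the robber's move between the two probes: once the first probe narrows the robber to $\{4,5,6,7\}$, a single step inflates the candidate set to $11$ vertices spanning both sides of the bipartition, and one must find one probe that simultaneously separates four mutually-distance-$2$ points and all seven lines. The device that makes this tractable---and that I expect to reappear in the proof of Theorem~\ref{thm:captProj}---is that point-cops assign points and lines disjoint ranges of distance values, which decouples the two separation tasks and reduces each to the elementary fact that no line of the Fano plane contains a fixed non-collinear triple.
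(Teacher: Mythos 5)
Your overall architecture matches the paper's proof: the paper also establishes the upper bound by probing a non-collinear point triple ($\{1,4,6\}$ in its labelling) and then a second, disjoint non-collinear point triple ($\{2,3,5\}$), and it establishes the lower bound by splitting the three cops between points and lines and invoking the symmetry of the design (your lower bound is in fact written out more completely than the paper's, which leaves two of the four cases to the reader). However, your upper bound has a genuine gap in round~2: you need the triple $\{5,6,7\}$ to be non-collinear for the claim that the seven lines realize the seven proper subsets of $\{5,6,7\}$, and this is not automatic. Since $\{1,2,3\}$ is non-collinear, your own round-1 lemma shows that exactly one line of the Fano plane is disjoint from $\{1,2,3\}$, and its three points lie inside $\{4,5,6,7\}$; so exactly one of the four triples in the complement is collinear, and nothing in your setup prevents it from being $\{5,6,7\}$. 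If $\{5,6,7\}$ is that line, the round-2 probe fails: every other line meets it in exactly one point, so the two lines through $5$ other than the line $567$ both return the vector $(1,3,3)$, both lie in your candidate set $W$, and both are consistent with the robber having sat on point $5$ in round~1, so the cops cannot locate the robber and your strategy needs a third round.

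The fix is short: choose the round-2 probe to be a non-collinear triple inside $\{4,5,6,7\}$, which exists because only one of the four triples there is collinear; equivalently, relabel so that the omitted point $4$ lies on the unique line disjoint from $\{1,2,3\}$. With that stipulation your subset-realization argument goes through verbatim and the proof is correct. This is exactly what the paper's explicit choice accomplishes: its unprobed point $7$ lies on the line $275$, the unique line avoiding its first probe $\{1,4,6\}$, so its second probe $\{2,3,5\}$ is non-collinear.
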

\begin{proof}
We identify the Heawood graph as the incidence graph of the Fano plane with $X = \{1,2,3,4,5,6,7\}$ and $\mathcal{B} = \{123, 174, 165, 246, 275, 345, 376\}$. For the first round, the cops probe vertices 1, 4 and 6 on $X.$ Each vertex in $\mathcal{B}$ is uniquely identifiable so the robber must reside on $X.$ Next, move the cops to 2, 3, and 5. All of the vertices of $\mathcal{B}$ are uniquely identifiable so the robber could not have moved over there. If the robber was on 1,2,3,4,5 or 6, then they were immediately identified when the vertex was probed. Otherwise, the robber is on 7, the unique vertex that was not ever visited by cops and is of distance 2 from all the cops. Observe that the first round reduced the possible location of the robber in $X$ to exclude $\{1,4,6\}$ while the second step reduced $X$ to exclude $\{2,3,5\}$ leaving only $7$ as a possibility to probe distance $2$ without previously being identified.

We show that the robber cannot be captured in one round. We identify the ways in which the cops can be distributed among the vertices. There are four cases to consider: (1) all three are on $X$, (2) all three are on $\mathcal{B}$, (3) two are on $X$ and one is on $\mathcal{B}$, or (4) one is on $X$ and two are on $\mathcal{B}$. As the design is symmetric, checking cases (1) and (3) suffice. We leave the remaining details to the reader.
\end{proof}

The following bounds the capture time of the incidence graphs of projective planes of order.
\begin{theorem} \label{thm:captProj}
Let $G$ be the incidence graph of a projective plane of order $q\ge 2$.
For $k\ge q+1$ an integer, we have that
\[\capt_{\zeta,k}(G) \leq \left\lceil \frac{q-1}{k-q}\right\rceil + \left\lceil \frac{q}{k-q+1}\right\rceil.\]
\end{theorem}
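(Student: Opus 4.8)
Let $G$ be the incidence graph of a projective plane of order $q$, with point-set $X$ and line-set $\mathcal{B}$, so that $|X|=|\mathcal{B}|=q^2+q+1$, every point lies on $q+1$ lines, every line contains $q+1$ points, and by \cite{BHM} we have $\zeta(G)=q+1$. The plan is to exhibit an explicit cop strategy using $k \ge q+1$ cops that captures the robber within $\lceil (q-1)/(k-q)\rceil + \lceil q/(k-q+1)\rceil$ rounds. The key structural feature to exploit is that $G$ is bipartite with diameter $3$: two points are at distance $2$, two lines are at distance $2$, and an incident point-line pair is at distance $1$ while a non-incident pair is at distance $3$. Because of this, the very first probe should let the cops determine the \emph{color} (point or line) of the robber, after which the two halves of the bound correspond to two phases: first localizing the robber among the points, then among the lines (or vice versa), using the fact that any two distinct lines meet in exactly one point and any two distinct points lie on exactly one common line.

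**Phase structure.** I would first argue that with $q+1$ cops placed on all lines through a fixed point $P$ (equivalently, the closed neighborhood structure at $P$), the distance vector pins down the robber's bipartite class and, within one class, reveals a great deal: if the robber sits on a line $\ell$, then $\ell$ meets exactly one of the $q+1$ chosen lines through $P$ at a point other than $P$, and the distances distinguish $\ell$ from lines in a different pencil. The extra $k-(q+1)$ cops beyond the threshold are the resource that drives the speed-up: in each round we can probe $k-q$ or $k-q+1$ additional candidate vertices, so after $t$ rounds we have eliminated on the order of $t(k-q)$ candidates. The two ceilings in the bound, $\lceil (q-1)/(k-q)\rceil$ and $\lceil q/(k-q+1)\rceil$, should therefore arise as the number of rounds needed to sieve through roughly $q-1$ remaining point-candidates at a rate of $k-q$ per round, and then roughly $q$ remaining line-candidates at a rate of $k-q+1$ per round (the slightly different denominators reflecting that one cop must be held fixed as an ``anchor'' in the first phase but can be freed in the second).

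**Carrying out the sieve.** Concretely, in the first phase I would fix one cop as an anchor on a point $P$ (so that distances are always measured relative to a known reference, collapsing the candidate set to those consistent with a fixed distance from $P$), and use the remaining $k-q$ mobile cops to probe fresh points each round. Using axiom (1) — a unique line through any two points — each probe of a point $Q$ partitions the surviving line-candidates according to whether they pass through $Q$, and this lets the cops discard $k-q$ possibilities per round from a pool of size about $q-1$. Once the robber is localized to a single line (or a set of points on a common line), the second phase localizes the exact vertex; here I can release the anchor and probe $k-q+1$ vertices per round against a pool of size about $q$, using axiom (2) dually. Throughout, the invariant to maintain is an explicit description of the candidate set after each round together with the guarantee that the robber, even while moving along edges of $G$, cannot escape the shrinking set — this is where the bipartite diameter-$3$ geometry is essential, since the robber's single-step moves only carry it between adjacent point-line pairs that the distance data tracks.

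**Main obstacle.** The hardest part will be the bookkeeping that shows the candidate set genuinely shrinks at the claimed rate \emph{even though the robber moves between probes}. A static identification argument (fixing the robber and counting how many probes distinguish it) is straightforward from the incidence axioms, but the robber is allowed to move along an edge each round, so I must verify that the set $V_t$ of possible robber locations, after intersecting the previous neighborhood expansion $N[V_{t-1}]$ with the new distance constraints, still loses the required number of elements. The delicate point is that moving from a point to an incident line (or back) can reintroduce candidates, and I must ensure the anchor cop plus the fresh probes together more than compensate for this expansion. I expect to handle this by choosing the probed vertices adaptively so that each round strictly eliminates a full $k-q$ (resp.\ $k-q+1$) candidates net, and by checking the two edge cases $q=2$ and $k=q+1$ separately to confirm the ceilings are attained rather than exceeded.
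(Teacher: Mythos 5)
Your high-level plan has the right shape --- two phases matching the two ceiling terms, with stationary ``anchor'' cops plus a sweeping sieve --- and this is indeed the skeleton of the paper's proof. But the proposal has a concrete geometric error and leaves the actual crux unresolved, so it is a plan rather than a proof. The geometric error: if you place cops on all $q+1$ lines through a point $P$ and the robber sits on a line $\ell$ not through $P$, then $\ell$ meets \emph{every} one of those $q+1$ lines (any two lines of a projective plane meet), each at a point other than $P$, so every cop probes distance exactly $2$. All lines not through $P$ produce the identical distance vector, so these probes distinguish nothing within that class --- contrary to your claim that they ``distinguish $\ell$ from lines in a different pencil.'' Moreover, anchoring $q+1$ cops leaves only $k-q-1$ sweepers, which would give denominators $k-q-1$, not the claimed $k-q$. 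The paper's phase~1 fixes both problems with one device: put $q$ stationary cops on $A=N(u_2)\setminus\{u_1\}$ (all but \emph{one} line through a point $u_2$). Then any robber on a point is instantly localized to a neighborhood: either some cop in $A$ probes $1$ (robber in $N(x)$ for that cop's line $x$), or all cops in $A$ probe $3$, which forces the robber onto a point of the omitted line $u_1$, i.e.\ into $N(u_1)$. This confines an un-localized robber to the lines, and the full $k-q$ remaining cops sweep the $q$ points of $N(u_1)\setminus\{u_2\}$ (only $q-1$ of them, by elimination), whose pencils partition the line-candidates --- giving $t_1\le\lceil (q-1)/(k-q)\rceil$.

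The second, larger gap is exactly the one you flag yourself: the bookkeeping for a moving robber, which you defer with ``I expect to handle this by choosing the probed vertices adaptively.'' That deferral is the theorem; ``probe $k-q+1$ candidates per round against a pool of size $q$'' does not work as stated, because after the robber moves the candidate set is not simply the old pool minus the probed vertices. The paper's phase~2 mechanism is what makes the rate $k-q+1$ come out: with candidates $C\subseteq N(u)$, $|C|=\alpha$, place $\alpha-1$ cops on $C$ itself and $k-\alpha+1$ cops on a second pencil $N(v)$ with $v$ in the same part as $u$. The cops on $C$ make \emph{staying} in $C$ fatal (a probe of $0$, or all probes $2$ identifying the unique uncovered vertex), so the robber must step off some $x\in C$ into $N(x)\setminus\{u\}$; the probes identify $x$ whether or not $x$ carried a cop. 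Then each cop on $N(v)$ meets $N(x)$ in exactly one vertex and no two cops share that vertex (two lines meet in one point), so a probe of $1$ from $N(v)$ pins the robber exactly, and otherwise the new candidate set inside $N(x)$ has size $\alpha-(k-q+1)$. Iterating gives $t_2\le\lceil q/(k-q+1)\rceil$. Without this forced-move-and-track device (and the companion confinement device in phase~1), the claimed denominators cannot be justified, so the proposal as written does not establish the bound.
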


In the case $k=q+1,$ Theorem~\ref{thm:captProj} gives that for $G$ the incidence graph of a projective plane of order $q\ge 2,$ $$\capt_{\zeta}(G) \le q-1 + \left\lceil \frac{q}{2}\right\rceil.$$ Hence, the incidence graphs of projective planes are well-localizable and so satisfy the LCTC.
\begin{proof}
As we wish to show an upper bound on the capture time, we simply need a cop strategy where $k$ cops can capture the robber in $\left\lceil \frac{q-1}{k-q}\right\rceil + \left\lceil \frac{q}{k-q+1}\right\rceil$ rounds.
We do this over two phases. In the first phase, the cops take $t_1$ rounds to find that the robber is on a set of vertices $N(u)$ for any vertex $u$ in $G$.
In the second phase, the cops take $t_2 = \text{capt}_{\zeta,k}(G) - t_1$ additional rounds to capture the robber.
We bound both $t_1$ and $t_2$.

We start by providing the strategy for the first phase. Let $u_1\in \mathcal{B}$ and $u_2\in N(u_1)$.
The cops play on the $q$ vertices in $A=N(u_2) \setminus \{u_1\}$, and without loss of generality, play on $k-q \leq q-1$ of the vertices in $N(u_1) \setminus \{u_2\}$, labeled as $B_1$. Note that we may make this assumption since capture time monotonically decreases with $k$, and for larger $k$ the ceilings give the same upper bound.

If the robber is on $X \setminus \{u_2\}$, then there is a unique path of length two from $u_2$ to the robber, say $u_2xr$.
If there is a cop on $x$, then this cop will probe a distance of $1$, so  the cops know that the robber is on $N(x)$, and the first phase would end.
If there is not a cop on $x$, then $x = u_1$, and each cop in $A$ will probe a distance of $3$, which can only happen if the robber is on a neighbor of $u_1$, and the first phase would end.
The cops on $A$ will all probe a distance of $1$ if and only if the robber is on $u_2$, so the first phase ends if the robber is on $X$.

We may therefore assume the robber is on a vertex in $\mathcal{B}$.
If the first phase does not end on the first round, then the cops know that the robber is not on $A$ or on a neighbor of a vertex in $B_1$.
This means that the robber must reside on the vertex $u_1$ or on a neighbor of a vertex in $N(u_1) \setminus B_1$.
The cops on $B$ each probe $1$ in the case the robber is on $u_1$, so the first phase would immediately be over if this were the case.

On the $n$th round, the cops play on $A$ and $B_n$, where $B_n$ is a set of $k-q$ vertices chosen from  $N(u_1) \setminus (\{u_2\} \cup \bigcup_{i=1}^n B_i)$.
By similar reasoning to the above, the first phase ends unless the robber is on a neighbor of $N(u_1) \setminus (\{u_2\} \cup \bigcup_{i=1}^n  B_i)$.
After $\left\lceil \frac{q-1}{k-q}\right\rceil$ rounds, either some cop probes a distance of $1$ and so the first phase is over, or there is one vertex in $N(u_1) \setminus (\{u_2\} \cup \bigcup_{i=1}^n  B_i)$, and the robber is known by the cops to be residing on the neighborhood of this vertex.
Therefore, $t_1 \leq  \left\lceil \frac{q-1}{k-q}\right\rceil$.

We now give the strategy for the second phase.
Suppose that the robber was found to reside on $C\subseteq N(u)$, for some vertex $u$, where $|C| = \alpha \leq q+1$.

We first suppose that $\alpha \geq 3$.
Let $v$ be a vertex on the same part as $u$.
The cops play $\alpha-1$ vertices on $C$, and $k-\alpha+1$ vertices on $N(v)$, which we label as $D$.
If some cop on $C$ probes a distance of $0$, then the robber is captured.
If all cops on $C$ probe a distance of $2$, then the robber is on the unique vertex of $C$ that does not contain a cop.
Therefore, the robber cannot stay on a vertex of $C$ without being captured, so we assume the robber moves.
Suppose the robber moved from vertex $x \in C$ to a neighboring vertex $N(x) \setminus \{u\}$. If the robber moved to $u$, then they would be immediately captured by the cops.
The cop knows that the robber is on $N(x) \setminus \{u\}$.
Each cop on $D$ has exactly one neighbor on $N(x)$, and no two cops on $D$ share the same neighbor on $N(x)\setminus \{u\}$.
The robber is captured if they are on the neighbor of a cop on $D$.
We may therefore assume that the robber is on one of the $q-k+\alpha-1<\alpha$ other vertices of $N(x) \setminus \{u\}$, which implies that the cops  have found that the robber is residing on $C'\subseteq N(u)$, for some vertex $u$, where $|C'| = \alpha' = \alpha - (k-q+1)<\alpha$.

Now if $\alpha=2$, the cops instead play $2$ vertices on $C$, and $k-2$ vertices on $N(v)$, which we label as $D$.
If some cop on $C$ probes a distance of $0$, then the robber if found. Otherwise the robber has moved during its last turn.
If both cops on $C$ probe a distance of $1$, then the robber is on $u$.
Otherwise, one of the cops on $C$, say on vertex $x$, probes a distance of $1$, and the cops know the robber is on $N(x) \setminus\{u\}$.
Each of the $k-2 \geq q-1$ cops in $B$ is adjacent to one vertex of $N(x) \setminus\{u\}$, and vice versa.
Thus, if one of the cops in $D$ probes a distance of $1$, the cops know the exact location of the robber.
If all of the cops in $D$ probe a distance of $3$, then the robber must be on the unique vertex of $N(x) \setminus\{u\}$ without a neighbor in $D$.

We repeat the argument above inductively for $\lceil \frac{q}{k-q+1}\rceil$ rounds, in which case there can be at most one vertex that the robber could be residing on, and the robber is captured.\end{proof}

\section{Bounds using treewidth}\label{sec: bipartite and k-partite graphs}
We consider new bounds on the localization number and capture time using treewidth. Define the \emph{tree radius} of a graph $G$, written $\mathrm{tr}(G)$, as the minimum radius of all tree decompositions of $G$ that have width $\mathrm{tw}(G)$. We have the following result.
\begin{theorem}
For a graph $G$, we have that
\[ \zeta(G) \leq (\mathrm{tw}(G)+1) \mathrm{tr}(G).
\]
If an optimal tree decomposition has $L$ leaves, then we have that
$$\mathrm{capt}_{\zeta ,(\mathrm{tw}(G)+1)\mathrm{tr}(G)} \le L.$$
\end{theorem}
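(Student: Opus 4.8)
The plan is to exhibit a single cop strategy that uses at most $(\mathrm{tw}(G)+1)\mathrm{tr}(G)$ cops and captures the robber within $L$ rounds; reading off the cop count gives the first (localization) inequality, and reading off the number of rounds gives the bound on $\capt_{\zeta,(\mathrm{tw}(G)+1)\mathrm{tr}(G)}(G)$. First I would fix an optimal tree decomposition $(X,T)$ of width $\mathrm{tw}(G)$ whose tree $T$ attains radius $\mathrm{tr}(G)$, and root $T$ at a center bag $B_0$, so that every bag lies at depth at most $\mathrm{tr}(G)$ and every root-to-leaf path has at most $\mathrm{tr}(G)$ edges. The two facts I would lean on are standard and already implicit in the excerpt: for each edge $(B,B')$ of $T$ the intersection $B\cap B'$ separates, in $G$, the vertices lying only on the $B$-side of $T$ from those lying only on the $B'$-side; and, as in the pathwidth sweep recalled earlier, occupying all but one vertex of a bag either captures a robber on that bag or forces it onto the single free vertex, where it can be pinned on the following round.

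The strategy itself is a sweep of $T$ that keeps the robber confined to an ever-shrinking subtree. When the active portion of the sweep is the path $B_0,B_1,\dots,B_d$ from the root down to a current bag $B_d$, the cops occupy the $d-1$ separating intersections $B_i\cap B_{i+1}$ on the edges above the parent of $B_d$, and at the bottom they occupy all but one vertex of $B_d$, leaving a vertex private to $B_d$ free (so the interface $B_{d-1}\cap B_d$ stays occupied). Since each separating intersection sits inside a single bag, each contributes at most $\mathrm{tw}(G)+1$ cops, while the all-but-one economy at the deepest bag costs only $\mathrm{tw}(G)$; the total is therefore at most $(d-1)(\mathrm{tw}(G)+1)+\mathrm{tw}(G)\le(\mathrm{tw}(G)+1)\mathrm{tr}(G)$, which establishes the cop count and hence the localization bound. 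The occupied intersections act as separators guaranteeing that the robber cannot pass from the subtree currently under search into any region the cops have already cleared, so the set of candidate robber vertices is always contained in the subtree hanging below the current frontier.

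For the round count I would process the leaves of $T$ in the order the sweep reaches them and argue that each leaf costs one round: on reaching a leaf bag, the all-but-one move either captures the robber, identifies it on the last free private vertex, or certifies that the robber does not lie in that leaf's private region, after which the confinement invariant permanently removes the leaf from play. As there are $L$ leaves, the sweep terminates in at most $L$ rounds. I expect the main obstacle to be making \emph{one round per leaf} rigorous rather than the weaker \emph{one round per bag}: one must show that the private vertices of the \emph{internal} bags are disposed of within the rounds already charged to the leaves beneath them, so that internal bags never demand rounds of their own; that relocating the separators as the frontier moves from one leaf to the next never re-exposes a cleared subtree; and that the single free vertex left at the very end is pinned without spilling past the $L$-round budget. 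The cop-count verification is comparatively routine, the only subtlety being the off-by-one between the $\mathrm{tr}(G)+1$ bags on a root-to-leaf path and the $\mathrm{tr}(G)$ bags the budget affords, which is absorbed precisely by the all-but-one saving at the deepest bag.
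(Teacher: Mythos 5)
Your plan substitutes separator-occupation for bag-occupation, and that substitution is fatal in the localization game, where ``capture'' means the probe distances uniquely identify the robber's vertex, not merely that the robber is penned into a shrinking region. Under your placement, the only vertices ever probed are those in the intersections $B_i\cap B_{i+1}$ along the current root-to-leaf path, plus all but one vertex of the frontier bag; a vertex of an \emph{internal} bag that lies in no other bag and in neither of that bag's edge-intersections is never probed in any round of your sweep. Such vertices can be mutually indistinguishable by distance probes. Concretely, one can build a graph $G$ together with an optimal, minimum-radius tree decomposition having an internal bag $B_i=\{a,b,v,v'\}$, where $a\in B_{i-1}\cap B_i$, $b\in B_i\cap B_{i+1}$, the vertices $v,v'$ lie in no bag other than $B_i$, and $N(v)=N(v')=\{a,b\}$ in $G$. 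Then $v$ and $v'$ are non-adjacent twins, so every probe at a vertex $u\notin\{v,v'\}$ returns $d(u,v)=d(u,v')$; the only way to separate them is to probe one of them, which your cops never do. A robber that starts on $v$ and never moves is therefore never captured --- not within $L$ rounds, not ever --- so neither the localization inequality nor the capture-time bound follows from your strategy. The obstacle you flagged (``internal bags never demand rounds of their own'') is exactly this problem, and it cannot be patched within your cop placement: disposing of internal private vertices requires probing them, and doing that as the frontier descends bag by bag yields at best one round per \emph{bag}, which can far exceed $L$. Your terminal ``pinning'' step has the same defect in miniature: leaving one vertex unprobed is sound only if the observed distance vector distinguishes it from every other surviving candidate, which you never establish and which the twin example defeats.

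The paper's proof spends its budget precisely where you economize. Its cops occupy \emph{every} vertex of \emph{every} bag on the current center-to-leaf path --- that is what the $(\mathrm{tw}(G)+1)\,\mathrm{tr}(G)$ count pays for --- so a robber standing anywhere in a path bag is detected instantly by a distance-$0$ probe, and the candidate set is always confined to vertices lying only in unvisited bags. Each round the cops re-aim the entire path at a new leaf, chosen so the new path overlaps the old one as much as possible; the resulting invariants (visited bags are never readjacent to unvisited ones, and the robber can pass only between unvisited and occupied bags) guarantee the cleared region never gets re-contaminated, and each round retires a leaf, giving the bound of $L$ rounds. If you wish to salvage the separator-based placement, you would need an additional argument locating robbers on internal private vertices using separator probes alone, and the twin construction shows that no such argument can exist.
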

\begin{proof}
Let $X_i$ be the bags of a tree decomposition of $G$ with width $\mathrm{tw}(G)$ and radius $\mathrm{tr}(G)$.
Label each bag as \emph{unvisited}.
We begin by selecting a vertex $X_1$ that is in the center of the tree decomposition.
For the unique path from $X_1$ to some leaf of distance $\mathrm{tr}(G)$
from $X_1$, say $(X_1 =Y_1, Y_2,\ldots, Y_r)$, place a cop on each vertex of $G$ in the vertex set $Y_1 \cup Y_2 \cup \ldots \cup Y_r$.
Label each bag in $(Y_1, Y_2,\ldots, Y_r)$ as \emph{occupied}.
This set of vertices contains at most $(\mathrm{tw}(G)+1) \mathrm{tr}(G)$ vertices.
If the robber is not on the set of vertices $Y_1 \cup Y_2 \cup \dots \cup Y_r$, then the robber is not captured and so moves.

The cop now chooses a new path that intersects the last path in as many vertices as possible.
For example, if there is a leaf $U$ in the tree decomposition that is also adjacent to $Y_{r-1}$, then the cop may choose the path $(X_1 =Y_1,Y_2, \ldots ,Y_{r-1},U)$.
We label the bags that are no longer in the current path as \emph{visited}.
Note that by the cop strategy,
\begin{enumerate}
\item no visited bag will be revisited by the cops;
\item no unvisited bag is adjacent to a visited bag in the tree decomposition; and
\item if a bag $X$ is unvisited in the current round, then each of the neighbors of $X$ in the next round will be either unvisited or occupied.
\end{enumerate}
In particular, note that if some vertex of the original graph $v$ is not contained in an occupied bag, then $v$ may be contained in occupied bags or unoccupied bags, but not both.

We may assume that the robber was previously on a vertex $v$ with the property that all bags that contain $v$ are unvisited.
If the robber moved to a vertex $w$ that is now contained in an occupied bag, then the robber is captured as each such vertex contains a cop.
Supposing then that $w$ is not in an occupied bag, then by (3) and since $v$ was only in unvisited bags, $w$ can only be in unvisited bags, unless the robber is captured.
The cop continues this process until all paths starting at $X_1$ and ending at a leaf (of distance at most $\mathrm{tr}(G)$ from $X_1$) have been probed.
Eventually there will be no unvisited bags, by which time the robber has been captured.
As such, the cop must have one round for each leaf, and so the number of rounds needed for capture with this strategy is at most $L.$
\end{proof}

We also have the following upper bounds.
 \begin{theorem}
For a graph $G$ we have that
\[ \zeta(G) \leq (\mathrm{tw}(G)+1) (\Delta(G)+1),
\]
and
$$\mathrm{capt}_{\zeta,(\mathrm{tw}(G)+1) (\Delta(G)+1)} \le \mathrm{tr}(G)+1.$$
\end{theorem}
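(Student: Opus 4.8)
The plan is to establish both inequalities with a single cop strategy that descends a rooted optimal tree decomposition one bag at a time, occupying the closed neighborhood of the current bag each round. Fix a tree decomposition $(X,T)$ of width $\mathrm{tw}(G)$ and radius $\mathrm{tr}(G)$, and root $T$ at a central bag so that every root-to-leaf path in $T$ has at most $\mathrm{tr}(G)$ edges. For a bag $B$ write $N[B]=\bigcup_{v\in B}N[v]$ for its closed neighborhood in $G$; since $|B|\le \mathrm{tw}(G)+1$ and $|N[v]|\le \Delta(G)+1$, we have $|N[B]|\le (\mathrm{tw}(G)+1)(\Delta(G)+1)$. Thus the strategy below never uses more than $(\mathrm{tw}(G)+1)(\Delta(G)+1)$ cops, which yields the bound on $\zeta(G)$ as soon as we show the strategy captures the robber.

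First I would set up the descent invariant. Each bag $B$ of $T$ is a separator: deleting $B$ from $G$ splits $V(G)\setminus B$ into the regions $V_1,\dots,V_d$ associated with the subtrees of $T$ hanging off $B$, and no edge of $G$ joins distinct regions. The invariant maintained at the start of round $t$ is that the robber is confined to the region governed by the current subtree, whose root bag $B$ the cops are about to probe. In round $t$ the cops occupy all of $N[B]$. If the robber lies in $N[B]$, then some cop shares its vertex and probes distance $0$, so the robber is located. Otherwise the robber sits at distance at least $2$ from $B$ inside a unique region $V_j$, the cops determine $j$, move to the child subtree rooted at the child bag $B_j$, and repeat.

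The key step, and the main obstacle, is to verify that the probe vector from $N[B]$ reveals the correct child $j$ and that occupying the full neighborhood (rather than just $B$) confines the robber. For identification I would argue via a closest-cop computation: if $R\in V_j$ with $d(B,R)=d\ge 2$, then a shortest path from $B$ to $R$ leaves $B$ through a neighbor $x_1\in V_j\cap N[B]$ with $d(x_1,R)=d-1$, whereas every cop outside $V_j$ must route through the separator $B$ and so has distance at least $d(B,R)=d$ to $R$. Hence the minimum entry of the distance vector is attained only by cops lying in $V_j$, and because the neighbor sets $V_j\cap N[B]$ are pairwise disjoint across regions, any minimizing cop pins down $j$. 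For confinement, note that being uncaptured forces $d(R,B)\ge 2$ precisely because the cops cover all of $N[B]$; after the robber's single move it is still at distance at least $1$ from $B$, so it cannot cross the separator $B$ and remains in $V_j$. This is exactly where the factor $\Delta(G)+1$ is spent: occupying $B$ alone would only force $d(R,B)\ge 1$ and would let the robber slip back across the separator.

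Finally I would assemble the round count. Each round the cops descend from a bag to one of its children along the unique root-to-leaf path determined by the robber's actual position, so after at most $\mathrm{tr}(G)$ descents they reach a leaf bag $B_\ell$; there the governing region is contained in $B_\ell\subseteq N[B_\ell]$, so the final probe locates the robber. Counting the initial probe at the root together with the at most $\mathrm{tr}(G)$ descents gives capture in at most $\mathrm{tr}(G)+1$ rounds using $(\mathrm{tw}(G)+1)(\Delta(G)+1)$ cops, which simultaneously proves $\zeta(G)\le (\mathrm{tw}(G)+1)(\Delta(G)+1)$ and the stated bound on $\capt_{\zeta,(\mathrm{tw}(G)+1)(\Delta(G)+1)}(G)$. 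The remaining routine checks are that the invariant propagates when descending (a child bag separates its own subtree regions) and the handling of ties and of regions whose neighbor intersection is empty, none of which affect the counts.
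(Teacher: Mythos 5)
Your proposal is correct and takes essentially the same approach as the paper's proof: root an optimal-width tree decomposition of radius $\mathrm{tr}(G)$ at a central bag, occupy the closed neighborhood $N[B]$ of the current bag each round, use the closest-probing cop to identify the child region containing the robber, and descend, yielding capture within $\mathrm{tr}(G)+1$ rounds with $(\mathrm{tw}(G)+1)(\Delta(G)+1)$ cops. If anything, your write-up makes explicit the separator, identification, and confinement details that the paper's terser argument leaves implicit.
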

\begin{proof}
Let $X_i$ be bags of a tree decomposition of $G$ with width $\mathrm{tw}(G)$ and radius $\mathrm{tr}(G)$.
We begin by selecting a vertex $X_1$ that is in the center of the tree decomposition.
Using at most $(\mathrm{tw}(G)+1) (\Delta(G)+1)$ cops, we place a cop on each vertex of $X_1$, and a cop on each neighbor of a vertex in $X_1$.
If we suppose that the robber is not on the same vertex as a cop, then the cop with the smallest distance to the robber must lie on a neighboring vertex of $X_1$, and not in $X_1$ itself.

Suppose that a closest cop to the robber was on vertex $u_1$.
Note that $u_1$ is not in $X_1$.
If we delete the bag $X_1$ from the tree decomposition, then a forest is formed where there is exactly one subtree with a bag containing $u_1$.
We can then take $X_2$ to be the unique bag that is in this subtree and which was connected to $X_1$ in the tree decomposition.
This procedure can then be recursively applied, where on each round a new bag $X_i$ is selected that is further away from $X_1$ than the bag $X_{i-1}$ in the previous round.
We note that by this procedure, if the robber is on vertex $R$ during turn $i$ and has avoided capture, then every bag $B$ that contains $R$ has the property that the unique path from $B$ to $X_1$ in the tree decomposition contains the path $(X_i, X_{i-1}, \dots, X_2, X_1)$.
We repeat the procedure until a bag is chosen that is a leaf of the tree decomposition, by which time the robber must have been captured. This winning strategy for the cops takes at most $\mathrm{tr}(G)+1$ rounds.
\end{proof}

\section{Further directions}

The capture time for the localization game was introduced and studied for several graph families. We introduced the Localization Capture Time Conjecture (LCTC) and proved it for trees, interval graphs, and complete $k$-partite graphs. The monotone property of capture time on induced subgraphs was investigated, and we gave bounds on the capture time of trees. Upper bounds on the localization number and capture time were found in terms of treewidth. We presented upper bounds on the capture time of the incidence graphs of projective planes, verifying that those graphs also satisfy the LCTC.

Apart from the LCTC, there are other open questions surrounding the capture time. We noticed that in many cases, if $G$ is a graph of order $n$ and $k \ge 2$, then $\capt_{\zeta, \zeta+k}(G) = O(n/k)$. It would be interesting to see if this strengthening of the LCTC holds in general. There is more work to be done on upper bounds for the capture time on trees, and the bound on the capture time of incidence graphs of projective planes need tightening. One final question we present is whether computing the capture time on graphs is \textbf{NP}-hard.

\end{document}